\DeclareRobustCommand{\cev}[1]{%
  \mathpalette\do@cev{#1}%
}
\newcommand{\do@cev}[2]{%
  \fix@cev{#1}{+}%
  \reflectbox{$\m@th#1\vec{\reflectbox{$\fix@cev{#1}{-}\m@th#1#2\fix@cev{#1}{+}$}}$}%
  \fix@cev{#1}{-}%
}
\newcommand{\fix@cev}[2]{%
  \ifx#1\displaystyle
    \mkern#23mu
  \else
    \ifx#1\textstyle
      \mkern#23mu
    \else
      \ifx#1\scriptstyle
        \mkern#22mu
      \else
        \mkern#22mu
      \fi
    \fi
  \fi
}
\tikzstyle{mynodebase} = [draw,thick,circle,fill=blue!20,scale=1.75,inner sep=1.5,font={\scriptsize},minimum size=0.5cm]
\tikzstyle{mynode} = [mynodebase,fill=blue!20]
\tikzstyle{myedge} = [ultra thick]
\tikzstyle{myarc} = [ultra thick,->]
\tikzstyle{mybiarc} = [ultra thick,<->]
\newcommand{\diag}{\mathop{\mathrm{diag}}}
\newcommand{\RR}{\mathbb{R}}                                                    
\newcommand{\CC}{\mathbb{C}}                                                    
\newcommand{\gr}[1]{{\mathscr{#1}}}
\DeclareMathAlphabet{\mathpzc}{OT1}{pzc}{m}{it} 
\DeclareFontFamily{OT1}{pzc}{}
\DeclareFontShape{OT1}{pzc}{m}{it}{<-> s * [1.000] pzcmi7t}{}
\DeclareMathAlphabet{\mathpzc}{OT1}{pzc}{m}{it}
\newcommand{\hal}{{\mathpzc{h}_{\alpha}}} 
\newcommand{\hga}{{\mathpzc{h}_{\gamma}}} 
\newcommand{\OR}{{\Delta}} 
\newcommand{\ORd}{{\delta}} 
\newcommand{\HG}{{H_\gamma}} 
\newcommand{\HA}{{H_\alpha}} 
\newcommand{\AL}{{L_\gamma}} 
\newcommand{\Bp}{{R}}
\newtheorem{theorem}{Theorem}
\newtheorem{corollary}[theorem]{Corollary}
\newtheorem{remark}[theorem]{Remark}%
\newtheorem{definition}[theorem]{Definition}%
\newtheorem{example}[theorem]{Example}%
\newtheorem{construction}[theorem]{Construction}%
\begin{document}


\title{Incidence matrices and line graphs of mixed graphs}

\author{Mohammad Abudayah\footnote{School of Basic Sciences and Humanities, German Jordanian University, Amman, Jordan, mohammad.abudayah@gju.edu.jo}, 
Omar Alomari\footnote{School of Basic Sciences and Humanities, German Jordanian University, Amman, Jordan, omar.alomari@gju.edu.jo},
Torsten Sander\footnote{Fakult\"at f\"ur Informatik, Ostfalia Hochschule f\"ur angewandte Wissenschaften, Wolfenb\"uttel, Germany, t.sander@ostfalia.de}}

\maketitle

\begin{abstract}
In the theory of line graphs of undirected graphs there exists an important theorem linking
the incidence matrix of the root graph to the adjacency matrix of its line graph. For directed or mixed graphs, however, 
the exists no analogous result. The goal of this article is to present aligned definitions of the adjacency matrix,
the incidence matrix and line graph of a mixed graph such that the mentioned theorem is valid for mixed graphs.

\medskip

{\bf Keywords:} mixed graph, line graph, Hermitian adjacency matrix

{\bf MSC Classification:} Primary 05C50; Secondary 05C76
\end{abstract}

\section{Introduction}\label{intro}  

Line graphs have been an invaluable concept in graph theory for a long time.
The line graph $L(\gr{G})$ of an undirected graph $\gr{G}=(V,E)$ has vertex set $E$. Two vertices $e_1,e_2$ of $L(\gr{G})$ are adjacent if and only
if the edges $e_1,e_2$ are adjacent in $\gr{G}$. Hemminger and Beineke once called this
``probably the most interesting of all graph transformations'' \cite{Hemminger}.
As a direct consequence of the line graph definition, we have (cf.\ Lemma 3.6 in \cite{Biggs})
\begin{equation}\label{eq:btb}
  B^\ast B= A(L(\gr{G})) + 2I,
\end{equation}
where $B$ denotes the incidence matrix of $\gr{G}$, $B^\ast$ its conjugated transpose, $A(L(\gr{G}))$ denotes the adjacency matrix of $L(\gr{G})$ and
$I$ is the identity matrix.
Recall that the adjacency matrix of an undirected (resp.\ directed) graph on $n$ vertices is the $0$-$1$-matrix
such that the entry at position $(i,j)$ is $1$ if there is an edge between vertex no.\ $i$ and vertex no.\ $j$
(resp.\ from vertex no.\ $i$ to vertex no.\ $j$), and $0$ otherwise. 
The incidence matrix $B$ of an undirected graph on $n$ vertices is the $0$-$1$-matrix
such that the entry at position $(i,j)$ is $1$ if vertex no.\ $i$ and edge no.\ $j$
are incident, and $0$ otherwise.
Thus, there is a natural algebraic link between the matrix $B$ that captures
the incidence relation of $\gr{G}$ and the matrix $A(L(\gr{G}))$ that captures the adjacency relation of $L(\gr{G})$.

For directed graphs, the situation appears less satisfying. The line graph $L(\gr{D})$ of a directed digraph $\gr{D}=(V,E)$ (also called
the line digraph $L(\gr{D})$) has vertex set $E$. There is an arc from vertex $e_1$ to $e_2$ in $L(\gr{D})$ if and only
if the head vertex (i.e.\ terminal vertex) of the arc $e_1$ is the same as the tail vertex (i.e.\ initial vertex) of the arc $e_2$ in $\gr{D}$.
Clearly, the matrix $A(L(\gr{D}))$ is in general not symmetric. Hence, regardless of how we define
the incidence matrix of $\gr{D}$, the left-hand side of \eqref{eq:btb} is a symmetric matrix whereas the right-hand side is not.
To make things worse, when defining incidence matrices of directed graphs, one traditionally only refers to
oriented graphs, i.e.\ double arcs between vertices are forbidden (cf.\ \cite{Bapat}, \cite{Biggs}, \cite{Cvet}).
Moreover, for mixed graphs, no meaningful definition seems to exist at all. A mixed graph is a graph that has been derived from
an undirected graph by orienting some of its edges into arcs, while the unmodified edges remain as digons.  

In this paper, we will focus on mixed graphs instead of directed graphs since we want the number of edges to remain the same whenever
we transition to the underlying undirected simple graph. Our goal is to develop 
consistently aligned definitions of the adjacency matrix, the incidence matrix and the line graph of a mixed graph such that
equation \eqref{eq:btb} holds, moreover the well-known equation (cf.\ \cite{Cvet})
\begin{equation}\label{eq:bbt}
  B B^\ast = A(\gr{G}) + D
\end{equation}
that links the incidence matrix of a graph to its adjacency matrix. Here, $D$ denotes the diagonal matrix with the
respective degrees of all vertices of $\gr{G}$. It will turn out that, once suitable definitions of adjacency and incidence
matrix have been chosen, a matching notion of line graph arises in a perfectly natural way.

The rest of the paper is organized as follows. In section \ref{sec:line} the desired notion of line graph will be developed.
Section \ref{sec:root} will be devoted to answering the question how many mixed orientations of a graph may result
in the same mixed line graph. Finally, section \ref{sec:linegraphorient} will investigate 
under which conditions an arbitrary mixed orientation of some (undirected) line graph is actually
the mixed line graph of some mixed orientation of the root of the undirected line graph.

\section{Incidence matrix vs.\ line graph}\label{sec:line}

In this section we shall resolve the shortcomings mentioned in the introduction.
In the following, whenever we consider adjacency or incidence matrices of some graph, we tacitly refer to a
fixed (but otherwise arbitrary) vertex (resp.\ edge) order. The same reference order is also assumed when 
constructing derived objects (e.g.\ other matrices) by iterating over the vertex (resp.\ edge) set.
For the sake of convenience, we may index the entries of vertex order dependent matrices by the vertices themselves, not by
column/row numbers. For example, given a matrix $M$ and two vertices $x$ and $y$ such that $x$ is indexed as $i$-th vertex and
$y$ is indexed as $j$-th vertex, we use $M_{x,y}$ to refer to the cell at position $(i,j)$. Even shorter, $M_x$ refers to a diagonal entry. 
In the same spirit, we  use mixed vertex/edge indexing for incidence matrices.

Returning to equation \eqref{eq:btb}, we see that for a complex matrix $B$ the left-hand side of the equation is Hermitian.
Thus, it is natural to use a type of adjacency matrix of mixed graphs that is
Hermitian. Several authors have observed that there exists a natural generalization
of adjacency matrices as follows (cf.\ \cites{BM1,Liu}):

\begin{definition}\label{halpha}
Given a mixed graph $\gr{D}$ and	a number $\alpha\in\CC\setminus\RR$ with $\left\vert \alpha \right\vert =1$, we define the $\alpha$\hyp{}Hermitian adjacency matrix $H^\alpha(\gr{D})$ of $\gr{D}$ by
\begin{align}
(H^\alpha(\gr{D}))_{u,v} = \begin{cases}
1 & \text{if there is a digon between $u$ and $v$,}\\
\alpha &  \text{if there is an arc from $u$ to $v$,}\\
\bar{\alpha} &  \text{if there is an arc from $v$ to $u$,}\\
0 & \text{otherwise}.
\end{cases}
\end{align}
\end{definition}

When there is no ambiguity regarding the reference graph $\gr{D}$ we might just write $H^\alpha$ instead of $H^\alpha(\gr{D})$. 
The $\alpha$\hyp{}Hermitian adjacency matrix will be one cornerstone of what follows.
Inspired by its structure, we define an incidence matrix as follows:

\begin{definition}\label{balpha}
Given a mixed graph $\gr{D}$ and	a number $\beta\in\CC\setminus\RR$ with $\left\vert \beta \right\vert =1$, we define the $\beta$\hyp{}incidence matrix $B^\beta(\gr{D})$ of $\gr{D}$ by
\begin{align}
(B^\beta(\gr{D}))_{u,e} = \begin{cases}
1 & \text{if $e$ is a digon and $u$ is incident with it,}\\
\beta & \text{if $e$ is an arc and $u$ is its head vertex,}\\
\bar{\beta} &  \text{if $e$ is an arc and $u$ is its tail vertex,}\\
0 & \text{otherwise}.
\end{cases}
\end{align}
\end{definition}

Note that, both for $H^\alpha$ and $B^\beta$, the restriction $\alpha,\beta\not\in\RR$ is important in order to faithfully encode the
adjacency and incidence relations of mixed graphs.

Up to this point, the choices of $\alpha$ and $\beta$ can be made independently. 
However, requiring equation \eqref{eq:bbt} to be valid, restricts our choices drastically.

\begin{theorem}\label{thm:alphabeta}
Let $\gr{D}$ be a mixed graph, $B=B^\beta(\gr{D})$, $A=H^\alpha(\gr{D})$ and $D=\diag((\deg(v))_{v\in V(\gr{D})})$ the degree diagonal matrix of $\gr{D}$.
Then: 
\begin{enumerate}
\item[(i)]
$B B^\ast$ can be derived from $A + D$ be replacing each entry $\alpha$ (resp.\ $\overline{\alpha}$) 
with $\overline{\beta}^2$ (resp.\ $\beta^2$), and vice versa.
\item[(ii)]
Assuming $\gr{D}$ contains at least one arc, 
$B B^\ast = A + D$ if and only if $\alpha = \overline{\beta}^2$.
\end{enumerate}
\end{theorem}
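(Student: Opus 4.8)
The plan is to compute the matrix $BB^\ast$ entrywise and compare it directly against $A+D$, since both (i) and (ii) are really assertions about how corresponding entries relate. Writing $(BB^\ast)_{u,v} = \sum_{e} B_{u,e}\overline{B_{v,e}}$, where the sum ranges over all edges $e$, I would first exploit that $\gr{D}$ has a simple underlying undirected graph: any two distinct vertices $u,v$ are joined by at most one edge, which is either a digon or an arc in exactly one direction. Hence for $u\neq v$ the sum collapses to a single (possibly zero) term, while for $u=v$ only edges incident with $u$ contribute. This reduction is what makes the comparison clean, and it is the one structural hypothesis the whole argument rests on.

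First I would settle the diagonal. For $u=v$ each incident edge contributes $|B_{u,e}|^2$, and since $|1|=|\beta|=|\bar\beta|=1$, every incident edge contributes exactly $1$; summing over the $\deg(u)$ incident edges gives $(BB^\ast)_{u,u}=\deg(u)=(A+D)_{u,u}$, using that $H^\alpha$ has zero diagonal. So diagonal entries agree regardless of the choices of $\alpha$ and $\beta$.

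Next, the off-diagonal entries for $u\neq v$, split according to the unique edge (if any) joining $u$ and $v$. A digon gives $B_{u,e}\overline{B_{v,e}}=1\cdot\bar 1=1$, matching $(A)_{u,v}=1$. An arc from $u$ to $v$ makes $u$ the tail and $v$ the head, so $B_{u,e}=\bar\beta$ and $B_{v,e}=\beta$, yielding $\bar\beta\cdot\overline{\beta}=\bar\beta^2$, whereas $(A)_{u,v}=\alpha$. An arc from $v$ to $u$ symmetrically produces $\beta\cdot\overline{\bar\beta}=\beta^2$ against $(A)_{u,v}=\bar\alpha$. This is precisely the correspondence claimed in (i): zero and digon entries coincide, while each $\alpha$ in $A+D$ becomes $\bar\beta^2$ in $BB^\ast$ and each $\bar\alpha$ becomes $\beta^2$, the reverse substitution recovering $A+D$.

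Part (ii) then follows at once. Because diagonal and digon entries always match, $BB^\ast=A+D$ holds exactly when every arc entry coincides, i.e.\ $\bar\beta^2=\alpha$ on each arc $u\to v$ and $\beta^2=\bar\alpha$ on each arc $v\to u$. If at least one arc is present, this forces $\alpha=\bar\beta^2$; conversely $\alpha=\bar\beta^2$ gives $\bar\alpha=\beta^2$ by conjugation, so both arc conditions hold simultaneously and $BB^\ast=A+D$. The only point that genuinely requires care is the bookkeeping of conjugations, tracking tail versus head together with the conjugate transpose so that the $u\to v$ term really produces $\bar\beta^2$ and not $\beta^2$. Once that orientation convention is pinned down, the two arc conditions are complex conjugates of each other and collapse to the single equation $\alpha=\bar\beta^2$, which is exactly the stated criterion.
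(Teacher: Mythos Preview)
Your proof is correct and follows essentially the same approach as the paper: an entrywise computation of $BB^\ast$ via inner products of rows of $B$, splitting into diagonal and off-diagonal cases, and then distinguishing digon versus arc. Your write-up is in fact cleaner on the bookkeeping; the paper's own proof contains a harmless swap (writing $(BB^\ast)_{u,v}=\alpha$ and $(A+D)_{u,v}=\overline{\beta}^2$ where the two sides should be interchanged), whereas you track tail/head and conjugation correctly to obtain $(BB^\ast)_{u,v}=\overline{\beta}^2$ against $(A+D)_{u,v}=\alpha$.
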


\begin{proof}
This is a direct consequence of \Cref{{balpha}}.
To determine $(B B^\ast)_{u,v}$ one computes the inner product of row $u$ and the conjugate of row $v$ of $B$.
For $u=v$, since $\beta\overline{\beta}=1$, we obtain the degree of $u$. 
For $u\not=v$, $(B B^\ast)_{u,v}\not=0$ if and only if $u$ and $v$ are adjacent in $\gr{D}$.
In case of a digon $uv$ we have $(B B^\ast)_{u,v} = (A + D)_{u,v} = 1$.
In case of an arc from $u$ to $v$ (resp.\ $v$ to $u$) we have $(B B^\ast)_{u,v} = \alpha$ and $(A + D)_{u,v} = \overline{\beta}^2$
(resp.\ $(B B^\ast)_{u,v} = \overline{\alpha}$ and $(A + D)_{u,v} = {\beta}^2$).
Thus, $B B^\ast$ and $A + D$ have identical diagonals and the same zero-nonzero pattern.
Assuming $\gr{D}$ contains at least one arc, 
the two matrices are identical if and only if $\alpha = \overline{\beta}^2$.
\end{proof}

If we require identical parameters $\alpha=\beta$ for both the adjacency and the incidence matrix, then
this leaves us with just two choices:

\begin{corollary}\label{cor:btbchoice}
Let $\gr{D}$ be a mixed graph containing at least one arc, $B=B^\alpha(\gr{D})$, $A=H^\alpha(\gr{D})$ and $D$ the degree diagonal matrix of $\gr{D}$.
Then $B B^\ast = A + D$ 
if and only if $\alpha=\overline{\beta}^2\in\{\gamma, \gamma^2\}$, with $\gamma=e^{\frac{2\pi}{3}i}$.
\end{corollary}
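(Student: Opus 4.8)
The plan is to reduce the statement to \Cref{thm:alphabeta}(ii) and then solve a single unimodular equation. Since the hypothesis imposes the common parameter $\beta=\alpha$ on both the adjacency and the incidence matrix, and since $\gr{D}$ contains at least one arc, part (ii) of the theorem applies verbatim and tells us that $BB^\ast = A+D$ holds if and only if $\alpha = \overline{\beta}^2 = \overline{\alpha}^2$. Thus the entire content of the corollary is to determine which unimodular, non-real $\alpha$ satisfy the equation $\alpha = \overline{\alpha}^2$.

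First I would exploit $|\alpha|=1$ to write $\overline{\alpha} = \alpha^{-1}$, so that $\overline{\alpha}^2 = \alpha^{-2}$ and the equation $\alpha = \overline{\alpha}^2$ becomes equivalent to $\alpha^3 = 1$. Hence $\alpha$ must be a cube root of unity, namely one of $1$, $\gamma = e^{\frac{2\pi}{3}i}$ or $\gamma^2 = e^{\frac{4\pi}{3}i}$. Conversely, each cube root of unity is unimodular and satisfies $\alpha^3=1$, hence also $\alpha = \overline{\alpha}^2$, so the full solution set of the equation is exactly $\{1, \gamma, \gamma^2\}$.

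Finally I would invoke the standing restriction $\alpha \in \CC\setminus\RR$ from \Cref{halpha}, which discards the real root $\alpha = 1$ and leaves precisely the two admissible values $\gamma$ and $\gamma^2$; combining this with the equivalence from \Cref{thm:alphabeta}(ii) yields the claim. I expect no genuine obstacle here, as the argument is a short computation. The only points requiring care are the bookkeeping that $\beta=\alpha$ turns the theorem's condition $\alpha=\overline{\beta}^2$ into $\alpha=\overline{\alpha}^2$, and remembering to exclude the real cube root so that the final answer consists of the two nonreal ones.
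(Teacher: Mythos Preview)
Your proposal is correct and matches the paper's intended argument: the paper states the corollary without proof, as an immediate consequence of \Cref{thm:alphabeta}(ii) under the specialization $\beta=\alpha$, and your reduction to the equation $\alpha=\overline{\alpha}^2$, rewritten via $\overline{\alpha}=\alpha^{-1}$ as $\alpha^3=1$, followed by discarding the real root, is exactly the computation being left to the reader.
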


Given any mixed graph $\gr{D}$, let $\Gamma(\gr{D})$ denote the underlying undirected graph, (i.e.\ derived by turning of arcs of $\gr{D}$ into digons).
By $A(\gr{G})$ we denote the traditional adjacency matrix of the undirected graph $\gr{G}$.
We now consider the goal equation \eqref{eq:btb}:

\begin{theorem}\label{thm:alphabeta2}
Let $\gr{D}$ be a mixed graph and $B=B^\beta(\gr{D})$.
Then: 
\begin{enumerate}
\item[(i)]
$B^\ast B$ has the same zero-nonzero pattern and main diagonal as the matrix $A(L(\Gamma(\gr{D})))+2I$.
\item[(ii)] 
Assuming $\gr{D}$ contains at least one vertex with both an incoming and an outgoing arc, 
there exists a mixed orientation $\gr{Y}$ of $L(\Gamma(\gr{D}))$ such that $B^\ast B=H_\alpha(\gr{Y})+2I$
if and only if either $\beta=\overline{\alpha}, \beta^2=\alpha$ or $\beta=\alpha, \beta^2=\overline{\alpha}$.
\end{enumerate}
\end{theorem}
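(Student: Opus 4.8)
The plan is to compute $B^\ast B$ entrywise directly from \Cref{balpha} and then read off both parts. Writing $b_{u,e} := (B^\beta(\gr{D}))_{u,e}$, every nonzero $b_{u,e}$ has modulus $1$, so the diagonal entry $(B^\ast B)_{e,e} = \sum_u |b_{u,e}|^2$ equals the number of endpoints of $e$, namely $2$, matching the diagonal of $A(L(\Gamma(\gr{D})))+2I$. For $e \neq f$ the entry $(B^\ast B)_{e,f} = \sum_u \overline{b_{u,e}}\,b_{u,f}$; since the underlying graph is simple, $e$ and $f$ meet in at most one vertex, so this is a single term of modulus $1$ when they share an endpoint and $0$ otherwise. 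Thus the nonzero off-diagonal positions are exactly the adjacencies of $L(\Gamma(\gr{D}))$, which settles (i).

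For (ii) the first key step is to classify the off-diagonal values. For adjacent $e,f$ meeting at $u$ we have $(B^\ast B)_{e,f} = \overline{b_{u,e}}\,b_{u,f}$ with each factor in $\{1,\beta,\overline\beta\}$, and enumerating the cases produces the value set $\{1,\beta,\overline\beta,\beta^2,\overline\beta^2\}$. The decisive refinement is which configuration yields which value: $\beta^{\pm 2}$ arises exactly when one of $e,f$ is an arc into $u$ and the other an arc out of $u$ (a directed through-path at $u$); $\beta^{\pm 1}$ arises exactly when one of $e,f$ is a digon at $u$ and the other an arc at $u$; and the remaining configurations give $1$.

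Next I would record the realizability criterion. Since $B^\ast B$ is Hermitian and already has the correct diagonal, $B^\ast B = H_\alpha(\gr{Y})+2I$ holds for some mixed orientation $\gr{Y}$ of $L(\Gamma(\gr{D}))$ if and only if every off-diagonal entry lies in $\{1,\alpha,\overline\alpha\}$; in that case $\gr{Y}$ is forced by the rule $1 \mapsto$ digon, $\alpha \mapsto$ arc, $\overline\alpha \mapsto$ reversed arc, and Hermiticity makes the assignment consistent. Sufficiency is then immediate: each stated condition forces $\alpha^3 = 1$ (for instance $\beta=\alpha,\ \beta^2=\overline\alpha$ gives $\alpha^2=\overline\alpha$, hence $\alpha^3=1$), and a one-line check shows $\{1,\beta,\overline\beta,\beta^2,\overline\beta^2\} \subseteq \{1,\alpha,\overline\alpha\}$, so the criterion holds for every such $\gr{D}$.

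The heart of the argument, and the step I expect to be the main obstacle, is necessity. Assuming $\gr{Y}$ exists, all off-diagonal entries lie in $\{1,\alpha,\overline\alpha\}$. By the classification the hypothesized vertex with both an incoming and an outgoing arc produces an entry $\overline\beta^2$, and because $\beta \notin \RR$ forces $\beta^2 \neq 1$, this yields $\beta^2 \in \{\alpha,\overline\alpha\}$. The delicate point is to also pin down $\beta$ itself: this requires exhibiting a $\beta$-type entry, i.e.\ a digon incident to an arc, so that $\beta \in \{\alpha,\overline\alpha\}$ is forced as well; reconciling this with the stated hypothesis (which on its own delivers only the $\beta^2$-entry) is the crux and must be handled using the genuinely mixed structure of $\gr{D}$. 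Once both constraints are in hand, I would solve the four sign-combinations of $\beta \in \{\alpha,\overline\alpha\}$ against $\beta^2 \in \{\alpha,\overline\alpha\}$: the combinations $(\beta=\alpha,\ \beta^2=\alpha)$ and $(\beta=\overline\alpha,\ \beta^2=\overline\alpha)$ each force $\alpha \in \RR$ and are discarded, leaving exactly $(\beta=\overline\alpha,\ \beta^2=\alpha)$ and $(\beta=\alpha,\ \beta^2=\overline\alpha)$. Guaranteeing the presence of both entry types and cleanly eliminating the degenerate combinations via $\alpha \notin \RR$ is where the proof needs the most care.
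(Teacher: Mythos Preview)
Your approach is exactly the paper's: compute $(B^\ast B)_{e,f}$ directly from \Cref{balpha}, read off the diagonal and zero--nonzero pattern for (i), list the possible off-diagonal values $S=\{1,\beta,\overline\beta,\beta^2,\overline\beta^2\}$, and for (ii) reduce to the containment $S\subseteq\{1,\alpha,\overline\alpha\}$, which under $\alpha,\beta\notin\RR$ leaves only the two stated pairs.

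The concern you flag about necessity is well taken, and it is worth noting that the paper does \emph{not} address it. The paper simply writes ``$B^\ast B-2I$ is an $\alpha$-Hermitian adjacency matrix if and only if $S\subseteq\{1,\alpha,\overline\alpha\}$'' and then solves this containment; it never checks that the values $\beta,\overline\beta$ are actually realized as entries. The stated hypothesis (a vertex with both an incoming and an outgoing arc) forces the values $\beta^{\pm2}$ to occur, but the values $\beta^{\pm1}$ arise only from a digon adjacent to an arc, which the hypothesis does not guarantee. In particular, your hope that ``the genuinely mixed structure of $\gr{D}$'' supplies the missing $\beta$-entry cannot be fulfilled from the hypothesis alone: a purely oriented $\gr{D}$ (no digons) with $\alpha=i$ and $\beta=e^{i\pi/4}$ satisfies $\beta^2=\alpha$, so $B^\ast B-2I$ has entries only in $\{1,\alpha,\overline\alpha\}$, yet neither of the two stated conditions holds. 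So the gap you spotted is a genuine lacuna in the paper's argument (and, read literally, in the statement); the paper's proof simply treats the full set $S$ as if it were always realized.
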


\begin{proof}
Again check the consequences of \Cref{{balpha}}. 
To determine $(B^\ast B)_{e_1,e_2}$ one computes the inner product of the conjugated column $e_1$ and the column $e_2$ of $B$.
For $e_1=e_1$ one obtains either a term $1+1$ or $\alpha\overline{\alpha}+\overline{\alpha}\alpha$, thus $(B^\ast B)_{e_1,e_1}=2$.
For $e_1\not=e_2$, we have $(B^\ast B)_{e_1,e_2}\not=0$ if and only $e_1$ and $e_2$ are incident in $\gr{D}$. Moreover,
the value of $(B^\ast B)_{e_1,e_2}$ arises from a single nonzero term. This matches the situation
arising in equation \eqref{eq:btb} for the undirected graph $\Gamma(\gr{D})$. Thus, 
$B^\ast B$, $A(L(\Gamma(\gr{D})))$ and $H_\alpha(\gr{Y})$ have the same zero-nonzero pattern, for every mixed orientation $\gr{Y}$ of $L(\Gamma(\gr{D}))$.
It follows from \Cref{{balpha}} that $(B^\ast B)_{e_1,e_2}\in\{\beta, \overline{\beta}, 1=\beta\overline{\beta}=\overline{\beta}\beta, \beta^2, \overline{\beta}^2 \}=:S$.
The terms $\beta^2, \overline{\beta}^2$ arise whenever, at the common vertex of $e_1$ and $e_2$, edge $e_1$ an outgoing arc and $e_2$
is an incoming arc -- or vice versa. $B^\ast B-2I$ is an $\alpha$\hyp{}Hermitian adjacency matrix if and only if
$S\subseteq\{1,\alpha,\overline{\alpha}\}$. Since we require $\alpha,\beta\not\in\RR$, there exist only two viable mappings,
namely $\beta=\overline{\alpha}, \beta^2=\alpha$ and $\beta=\alpha, \beta^2=\overline{\alpha}$.
\end{proof}

Again, if we require identical parameters $\alpha=\beta$ for both the adjacency and the incidence matrix, then
this leaves us with just two choices:

\begin{corollary}\label{cor:bbtchoice}
Let $\gr{D}$ be a mixed graph containing at least one vertex with both an incoming and an outgoing arc, 
$B=B^\alpha(\gr{D})$ and $A=H^\alpha(\gr{D})$.
Then $B^\ast B=H_\alpha(\gr{Y})+2I$ for some mixed orientation $\gr{Y}$ of $L(\Gamma(\gr{D}))$ 
if and only if $\alpha=\overline{\beta}^2\in\{\gamma, \gamma^2\}$, with $\gamma=e^{\frac{2\pi}{3}i}$.
\end{corollary}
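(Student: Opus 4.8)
The plan is to read the result off directly from \Cref{thm:alphabeta2}(ii) by imposing the extra constraint $\alpha=\beta$ and then solving the resulting numerical conditions on the unit-modulus parameter. Since the hypothesis ``at least one vertex with both an incoming and an outgoing arc'' is precisely the hypothesis of \Cref{thm:alphabeta2}(ii), no new graph-theoretic work is required: that theorem already guarantees the zero--nonzero pattern and the diagonal match and pins down when the off-diagonal values lie in $\{1,\alpha,\overline{\alpha}\}$, so the corollary is a pure computation.

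First I would recall that \Cref{thm:alphabeta2}(ii) characterises the existence of a mixed orientation $\gr{Y}$ with $B^\ast B=H_\alpha(\gr{Y})+2I$ by the disjunction ``$\beta=\overline{\alpha}$ and $\beta^2=\alpha$'' or ``$\beta=\alpha$ and $\beta^2=\overline{\alpha}$''. Substituting $\alpha=\beta$ into the first disjunct yields $\alpha=\overline{\alpha}$, which forces $\alpha\in\RR$ and contradicts the standing requirement $\alpha\notin\RR$; hence this case is vacuous. Substituting $\alpha=\beta$ into the second disjunct collapses $\beta=\alpha$ to a tautology and leaves the single equation $\alpha^2=\overline{\alpha}$. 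Taking complex conjugates shows this is equivalent to $\alpha=\overline{\alpha}^2=\overline{\beta}^2$, matching the form stated in the corollary.

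It then remains to solve $\alpha^2=\overline{\alpha}$ subject to $|\alpha|=1$ and $\alpha\notin\RR$. Writing $\alpha=e^{i\theta}$, the equation becomes $e^{3i\theta}=1$, so $\theta\in\{0,\tfrac{2\pi}{3},\tfrac{4\pi}{3}\}$; discarding $\theta=0$, which gives the forbidden real value $\alpha=1$, leaves exactly $\alpha=\gamma$ and $\alpha=\gamma^2$ with $\gamma=e^{\frac{2\pi}{3}i}$. This is the asserted conclusion, and it mirrors the treatment of \Cref{cor:btbchoice}.

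I expect no genuine obstacle here. The only two points that require care are (i) confirming that the first disjunct of \Cref{thm:alphabeta2}(ii) is actually \emph{excluded} by $\alpha\notin\RR$, rather than merely redundant, so that it contributes no spurious solutions; and (ii) checking that passing from $\alpha^2=\overline{\alpha}$ to the form $\alpha=\overline{\beta}^2$ via conjugation preserves the solution set, so that no roots of unity are gained or lost. Both are immediate once the substitution $\alpha=\beta$ has been made.
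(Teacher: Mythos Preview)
Your proposal is correct and follows exactly the approach the paper intends: the corollary is obtained from \Cref{thm:alphabeta2}(ii) by setting $\alpha=\beta$ and solving the resulting constraint $\alpha^2=\overline{\alpha}$ on the unit circle with $\alpha\notin\RR$. The paper itself gives no separate proof beyond the remark that requiring identical parameters ``leaves us with just two choices'', so your write-up simply makes explicit the computation the paper leaves to the reader.
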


As we can see from \Cref{cor:btbchoice,cor:bbtchoice}, there exist only two natural parameter choices
to make equations \eqref{eq:btb} and \eqref{eq:bbt} work if use the matrices from \Cref{halpha,balpha}.
Thus, the following line graph construction results as a natural consequence:

{
\newcommand{\rootFig}[6]{
	\scalebox{0.6}{\begin{tikzpicture}\node[] () at (0,-5) {};
  \node[mynode] (v0) at (0.7954545454545454,-5.5681818181818175) {$u$};
  \node[mynode] (v1) at (2.4999999999999996,-5.5681818181818175) {$v$};
  \node[mynode] (v2) at (4.204545454545454,-5.5681818181818175) {$w$};
  \draw[#1] (#2) edge (#3) {};
  \draw[#4] (#5) edge (#6) {};
  \end{tikzpicture}}
}
\newcommand{\lgFig}[3]{
	\scalebox{0.6}{\begin{tikzpicture}\node[] () at (0,-5) {};
  \node[mynode] (v0) at (0.7954545454545454,-5.5681818181818175) {$uv$};
  \node[mynode] (v1) at (2.4999999999999996,-5.5681818181818175) {$vw$};
  \draw[#1] (#2) edge (#3) {};
  \end{tikzpicture}}
}
\begin{figure}
  \begin{center}
  \begin{tabular}{|c|c|} \hline
	in $\gr{D}$ & in $\AL(\gr{D})$ \\ \hline \hline
	\rootFig{myarc}{v0}{v1}{myarc}{v1}{v2} & \lgFig{myarc}{v0}{v1}  \\ \hline
	\rootFig{myarc}{v1}{v0}{myarc}{v1}{v2} & \lgFig{myedge}{v0}{v1}  \\ \hline
	\rootFig{myarc}{v0}{v1}{myarc}{v2}{v1} & \lgFig{myedge}{v0}{v1}  \\ \hline
	\rootFig{myarc}{v0}{v1}{myedge}{v1}{v2} & \lgFig{myarc}{v1}{v0}  \\ \hline
	\rootFig{myarc}{v1}{v0}{myedge}{v1}{v2} & \lgFig{myarc}{v0}{v1}  \\ \hline
	\rootFig{myedge}{v1}{v0}{myedge}{v1}{v2} & \lgFig{myedge}{v0}{v1}  \\ \hline
	\end{tabular}
	\end{center}
		\caption{Construction of the $\gamma$\hyp{}line graph $\AL(\gr{D})$}
	\label{fig:lgamma_constr}
\end{figure}
}

\begin{figure}
  \begin{center}
	\raisebox{0mm}{\scalebox{0.68}{
\begin{tikzpicture}
\node[mynode] (v0) at (1.4411764705882353,2.264705882352941) {0};
\node[mynode] (v1) at (3.5,2.264705882352941) {1};
\node[mynode] (v2) at (3.5,0.20588235294117646) {2};
\node[mynode] (v3) at (5.5588235294117645,2.264705882352941) {3};
\node[mynode] (v4) at (1.4411764705882353,0.20588235294117646) {4};
\node[mynode] (v5) at (5.5588235294117645,0.20588235294117646) {5};
\node[mynode] (v6) at (5.5588235294117645,-1.8529411764705883) {6};
\draw[myarc] (v0) edge (v1) {};
\draw[myarc] (v2) edge (v0) {};
\draw[myarc] (v0) edge (v4) {};
\draw[myedge] (v1) edge (v2) {};
\draw[myarc] (v3) edge (v2) {};
\draw[myarc] (v4) edge (v2) {};
\draw[myarc] (v5) edge (v2) {};
\draw[myedge] (v3) edge (v5) {};
\draw[myedge] (v5) edge (v6) {};
\end{tikzpicture}
}
}
\hspace*{5mm}\raisebox{16mm}{\scalebox{1.25}{$\stackrel{\AL}{\longrightarrow}$}}\hspace*{5mm}
\raisebox{0mm}{\scalebox{0.6}{
\begin{tikzpicture}
\node[mynode] (v0) at (4.092592592592593,2.648148148148148) {0-1};
\node[mynode] (v1) at (1.6851851851851851,0.24074074074074073) {0-4};
\node[mynode] (v2) at (7.978835978835979,0.24074074074074073) {1-2};
\node[mynode] (v3) at (4.092592592592593,0.24074074074074073) {0-2};
\node[mynode] (v4) at (8.907407407407407,2.648148148148148) {2-3};
\node[mynode] (v5) at (11.314814814814815,0.24074074074074073) {3-5};
\node[mynode] (v6) at (4.092592592592593,-2.1666666666666665) {2-4};
\node[mynode] (v7) at (8.907407407407407,-2.1666666666666665) {2-5};
\node[mynode] (v8) at (11.314814814814815,-2.1666666666666665) {5-6};
\draw[myedge] (v0) edge (v1) {};
\draw[myarc] (v2) edge (v0) {};
\draw[myarc] (v3) edge (v0) {};
\draw[myarc] (v3) edge (v1) {};
\draw[myarc] (v1) edge (v6) {};
\draw[myarc] (v3) edge (v2) {};
\draw[myarc] (v2) edge (v4) {};
\draw[myarc] (v2) edge (v6) {};
\draw[myarc] (v2) edge (v7) {};
\draw[myarc] (v4) edge (v3) {};
\draw[myarc] (v6) edge (v3) {};
\draw[myarc] (v7) edge (v3) {};
\draw[myarc] (v4) edge (v5) {};
\draw[myedge] (v4) edge (v6) {};
\draw[myedge] (v4) edge (v7) {};
\draw[myarc] (v7) edge (v5) {};
\draw[myedge] (v5) edge (v8) {};
\draw[myedge] (v6) edge (v7) {};
\draw[myarc] (v7) edge (v8) {};
\end{tikzpicture}
}
}
	\end{center}
		\caption{$\gamma$\hyp{}line graph example}
	\label{fig:lgamma_example}
\end{figure}
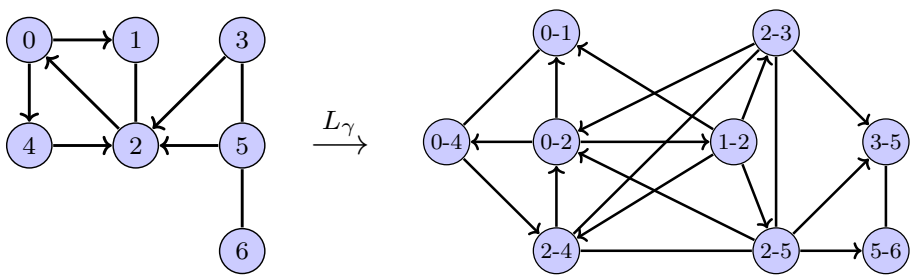

\begin{definition}\label{def:lgamma}
Given a mixed graph $\gr{D}$, the $\gamma$\hyp{}line graph $\AL(\gr{D})$ is defined as the mixed
orientation of $L(\Gamma(\gr{D}))$ arising according to \Cref{fig:lgamma_constr}.
\end{definition}

\Cref{fig:lgamma_example} contains an example illustrating \Cref{def:lgamma}.
As an alternative to \Cref{def:lgamma}, one can define the $\gamma^2$\hyp{}line graph of a mixed graph by reversing all arcs in the right column of
\Cref{fig:lgamma_constr}. In the following, we shall only be concerned with $\gamma$\hyp{}line graphs, but results for
$\gamma^2$\hyp{}line graphs can be obtained in a similar manner.

In view of \Cref{def:lgamma,cor:btbchoice,cor:bbtchoice}, the following result is now evident:

\begin{theorem}\label{thm:incidmatrixprod}
Let $\gr{D}$ be a mixed graph, $B=B^\gamma(\gr{D})$, $D=\diag((\deg(v))_{v\in V(\gr{D})})$. Then,
\begin{enumerate}
\item[(i)] $B^\ast B= \HG(\AL(\gr{D})) + 2I$,
\item[(ii)] $B B^\ast = \HG(\gr{D}) + D$.
\end{enumerate}
\end{theorem}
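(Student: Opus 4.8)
The plan is to obtain both identities by specializing the structural results already established, namely \Cref{thm:alphabeta,thm:alphabeta2}, to the parameter value $\beta=\alpha=\gamma$, and then to verify that the orientation these results predict is exactly the one prescribed in \Cref{fig:lgamma_constr}.

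For part (ii) I would invoke \Cref{thm:alphabeta}(i) with $\beta=\gamma$ and $\alpha=\gamma$. Since $\gamma=e^{\frac{2\pi}{3}i}$ satisfies $\gamma^3=1$, we have $\overline{\gamma}=\gamma^2$, hence $\overline{\beta}^2=\overline{\gamma}^2=\gamma^4=\gamma=\alpha$ and $\beta^2=\gamma^2=\overline{\gamma}=\overline{\alpha}$. Therefore the entrywise replacement described in \Cref{thm:alphabeta}(i) is the identity map, and $BB^\ast=\HG(\gr{D})+D$ follows at once; note that, unlike the ``if and only if'' in part (ii) of that theorem, the plain equality needs no hypothesis on the presence of arcs.

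For part (i) I would argue in three steps. First the diagonal: every edge $e$ of $\gr{D}$ is incident with exactly two vertices, each contributing a term of modulus $1$ to $(B^\ast B)_{e,e}=\sum_u|(B^\gamma)_{u,e}|^2$, so $(B^\ast B)_{e,e}=2=(\HG(\AL(\gr{D}))+2I)_{e,e}$. Second the off-diagonal support: by \Cref{thm:alphabeta2}(i), $B^\ast B$ and $A(L(\Gamma(\gr{D})))+2I$ share the same zero-nonzero pattern, so for two edges $e_1,e_2$ of $\gr{D}$ meeting in a (necessarily unique) common vertex $v$ the inner product collapses to the single term $(B^\ast B)_{e_1,e_2}=\overline{(B^\gamma)_{v,e_1}}\,(B^\gamma)_{v,e_2}$. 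Third the phase: I would evaluate this product across the local configurations at $v$ — each of $e_1,e_2$ being a digon, an arc with head $v$, or an arc with tail $v$ — using $\overline{\gamma}\gamma=1$, $\overline{\gamma}^2=\gamma$ and $\gamma^2=\overline{\gamma}$. For instance, if $e_1$ is an arc into $v$ and $e_2$ an arc out of $v$ (the first row of \Cref{fig:lgamma_constr}), then $(B^\ast B)_{e_1,e_2}=\overline{\gamma}\cdot\overline{\gamma}=\gamma$, precisely the value $\HG$ assigns to an arc from $e_1$ to $e_2$, matching the prescribed orientation. Running through the remaining rows shows that each nonzero off-diagonal entry equals $1$, $\gamma$ or $\overline{\gamma}$ exactly when \Cref{fig:lgamma_constr} produces a digon, an arc $e_1\to e_2$, or an arc $e_2\to e_1$; that is, $B^\ast B-2I=\HG(\AL(\gr{D}))$.

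The only point requiring care is that \Cref{fig:lgamma_constr} lists six local configurations whereas there are nine ordered ones. The missing three are the images of rows $1$, $4$ and $5$ under interchanging $e_1$ and $e_2$, and I would observe that this interchange conjugates the product $\overline{(B^\gamma)_{v,e_1}}(B^\gamma)_{v,e_2}$ while simultaneously reversing the arc direction read off from the figure. Since both $B^\ast B$ and $\HG$ are Hermitian, checking one representative of each conjugate pair suffices, so this symmetry both confirms that the construction is well defined and reduces the verification to the six displayed cases. This bookkeeping, rather than any genuine difficulty, is the main thing to get right.
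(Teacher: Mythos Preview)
Your proposal is correct and follows essentially the same route as the paper, which simply declares the theorem ``evident'' from \Cref{def:lgamma,cor:btbchoice,cor:bbtchoice}; you just spell out the verification that the corollaries leave implicit. Your choice to go back to \Cref{thm:alphabeta}(i) and \Cref{thm:alphabeta2}(i) rather than the corollaries is a good one, since it lets you drop the side conditions (presence of an arc, resp.\ of a vertex with both incoming and outgoing arcs) that the corollaries carry only for their ``only if'' directions.
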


The following \Cref{thm:identincidence,thm:identincidroot} act as converses
to \Cref{thm:incidmatrixprod}.

\newcommand{\someIncid}{{R}} 

\begin{theorem}\label{thm:identincidence}
Let $\gr{X}$ be a mixed orientation of some graph $\gr{G}$. Let $\someIncid$ be matrix with nonzero entries from the
set $\{1, \gamma, \gamma^2\}$ having the same zero-nonzero pattern as the incidence matrix of $\gr{G}$.
If 
\begin{align}
\begin{aligned}\label{eq:identbp}
 \someIncid \someIncid^\ast & = \HG(\gr{X}) +  \diag((\deg(v))_{v\in V(\gr{T})}),
\end{aligned}
\end{align}
then $\someIncid=B^\gamma(\gr{X})$, i.e.\ it is the $\gamma$\hyp{}incidence matrix of $\gr{X}$.
\end{theorem}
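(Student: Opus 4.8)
The plan is to compare $\someIncid$ with $B^\gamma(\gr{X})$ directly, column by column, using \eqref{eq:identbp} to read off the structure of $\gr{X}$ and then to match individual entries. The first observation I would make is that the main diagonal of \eqref{eq:identbp} carries no information: every nonzero entry of $\someIncid$ has modulus $1$ and $\someIncid$ has the incidence pattern of $\gr{G}$, so the $(u,u)$-entry of $\someIncid\someIncid^\ast$ equals the number of edges incident with $u$, i.e.\ $\deg(u)$, which automatically agrees with the $(u,u)$-entry $0+\deg(u)$ of $\HG(\gr{X})+\diag((\deg(v))_{v\in V(\gr{X})})$. Hence all of the content of \eqref{eq:identbp} resides in its off-diagonal entries.

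Next I would fix an edge $e=uv$ of $\gr{G}$ and extract the corresponding off-diagonal equation. Since the underlying graph is simple, $e$ is the unique edge joining $u$ and $v$, so $(\someIncid\someIncid^\ast)_{u,v}=\someIncid_{u,e}\,\overline{\someIncid_{v,e}}$, and \eqref{eq:identbp} forces this product to equal $(\HG(\gr{X}))_{u,v}$. By \Cref{halpha} the latter is $1$, $\gamma$ or $\gamma^2$ according as $e$ is a digon, an arc from $u$ to $v$, or an arc from $v$ to $u$ in $\gr{X}$; in particular it recovers the type and orientation of every edge of $\gr{X}$. Because $\someIncid_{u,e}$ and $\someIncid_{v,e}$ are cube roots of unity, the product $\someIncid_{u,e}\,\overline{\someIncid_{v,e}}$ equals the quotient $\someIncid_{u,e}/\someIncid_{v,e}$, which is exactly the quotient of the two nonzero entries that \Cref{balpha} prescribes for column $e$ of $B^\gamma(\gr{X})$. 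Consequently $\someIncid$ and $B^\gamma(\gr{X})$ agree in each column up to a common factor $\lambda_e\in\{1,\gamma,\gamma^2\}$.

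The step I expect to be the crux is showing that every $\lambda_e$ equals $1$. This is genuinely the delicate point, because the product $\someIncid\someIncid^\ast$ is unchanged when any column of $\someIncid$ is rescaled by a unimodular number, so the off-diagonal analysis alone determines the entries of each column only up to such a common factor. To remove the ambiguity I would appeal to the defining normalization of a $\gamma$\hyp{}incidence matrix: in $B^\gamma(\gr{X})$ the two nonzero entries of every column multiply to $1$, since $1\cdot 1=1$ for a digon and $\gamma\cdot\overline{\gamma}=1$ for an arc. Rescaling column $e$ by $\lambda_e$ multiplies this product by $\lambda_e^{2}$, so imposing the incidence normalization on $\someIncid$ gives $\lambda_e^{2}=1$; as the only cube root of unity whose square is $1$ is $1$ itself, we conclude $\lambda_e=1$ for all $e$ and hence $\someIncid=B^\gamma(\gr{X})$. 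Equivalently, once each column of $\someIncid$ is known to be a legitimate incidence column, $\someIncid=B^\gamma(\gr{Y})$ for a unique mixed orientation $\gr{Y}$ of $\gr{G}$; \Cref{thm:incidmatrixprod}(ii) then yields $\someIncid\someIncid^\ast=\HG(\gr{Y})+\diag((\deg(v))_{v\in V(\gr{Y})})$, and comparison with \eqref{eq:identbp} forces $\HG(\gr{Y})=\HG(\gr{X})$, whence $\gr{Y}=\gr{X}$ because the $\gamma$\hyp{}Hermitian adjacency matrix determines the mixed graph.
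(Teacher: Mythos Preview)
Your approach is essentially the same as the paper's: both read off the inner product of rows $u$ and $v$ of $\someIncid$ for each edge $uv$ and match the resulting single nonzero term $\someIncid_{u,e}\,\overline{\someIncid_{v,e}}$ against the entry $(\HG(\gr{X}))_{u,v}$.

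Where you are more careful than the paper is in singling out the column-rescaling ambiguity. You correctly observe that \eqref{eq:identbp} is invariant under multiplying any column of $\someIncid$ by a cube root of unity, so the off-diagonal analysis alone fixes each column only up to a factor $\lambda_e\in\{1,\gamma,\gamma^2\}$. The hypotheses as literally stated (nonzero entries in $\{1,\gamma,\gamma^2\}$ with the incidence pattern of $\gr{G}$) do \emph{not} suffice to force $\lambda_e=1$: for a single arc $u\to v$ one has $B^\gamma(\gr{X})_{u,e}=\gamma^2$, $B^\gamma(\gr{X})_{v,e}=\gamma$, but the column $(\someIncid_{u,e},\someIncid_{v,e})=(1,\gamma^2)$ also satisfies \eqref{eq:identbp}. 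Your fix---requiring the two nonzero entries of each column to be complex conjugates, i.e.\ that $\someIncid$ already be the $\gamma$\hyp{}incidence matrix of \emph{some} mixed orientation---is exactly what is needed, and your argument that then $\lambda_e^2=1$ forces $\lambda_e=1$ is clean.

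The paper's proof makes the very same extra assumption, but tacitly: the phrase ``Since $\someIncid$ is a $\gamma$\hyp{}incidence matrix'' is used to restrict the possible nonzero terms to $1\cdot 1$, $\gamma\cdot\gamma$, $\gamma^2\cdot\gamma^2$, which is precisely the conjugate-pair condition. So both arguments really prove the theorem under that strengthened hypothesis, and this is consistent with how the result is actually invoked later (e.g.\ in the proof of \Cref{thm:monoroot}, where $\someIncid$ is first shown to be a $\gamma$\hyp{}incidence matrix of some orientation before \Cref{thm:identincidence} is applied).
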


\begin{proof} 
        Let entry $(i,j)$ of $\HG(\gr{X})$ be $\gamma$. Consider inner product of
        row $i$ of $\someIncid$ and row $j$ of $\someIncid^\ast$. Since $\someIncid$ is a $\gamma$\hyp{}incidence
        matrix the inner product contains exactly one non-zero term,
        either of the form $1 \cdot 1$, $\gamma \cdot \gamma$, or $\gamma^2 \cdot \gamma^2$.
        Only in the latter case the equation \eqref{eq:identbp} is true, hence entry $(i,ij)$ of $\someIncid$
        must be $\gamma^2$, so the edge $ij$ of $\gr{G}$ is oriented in $\gr{X}$ just as claimed.
        The remaining cases follow in the same straightforward manner.
\end{proof}

\begin{theorem}\label{thm:identincidroot}
Let $\gr{Y}$ be a mixed orientation of the mixed line graph $\AL(\gr{G})$ of some graph $\gr{G}$.
If $\someIncid$ is the $\gamma$\hyp{}incidence matrix of some mixed orientation $\gr{X}$ of $\gr{G}$ such that
\begin{align}
\begin{aligned}\label{eq:verifybpy}
\someIncid^\ast\someIncid & = \HG(\gr{Y}) + 2I,
\end{aligned} 
\end{align}
then $\gr{Y}=\AL(\gr{X})$, i.e.\ $\gr{X}$ is a root of $\gr{Y}$.
\end{theorem}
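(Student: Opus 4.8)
The plan is to reduce the claim to the faithfulness of the $\gamma$\hyp{}Hermitian adjacency matrix, with \Cref{thm:incidmatrixprod}(i) serving as the bridge. By hypothesis $\someIncid$ is the $\gamma$\hyp{}incidence matrix of the mixed orientation $\gr{X}$ of $\gr{G}$, that is $\someIncid=B^\gamma(\gr{X})$. Hence part (i) of \Cref{thm:incidmatrixprod}, applied to $\gr{X}$, gives
\[
  \someIncid^\ast\someIncid = \HG(\AL(\gr{X})) + 2I .
\]
Comparing this with the assumed identity $\someIncid^\ast\someIncid = \HG(\gr{Y}) + 2I$ and cancelling the common term $2I$ yields $\HG(\AL(\gr{X})) = \HG(\gr{Y})$. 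The whole theorem will therefore follow once I argue that this matrix equality forces $\AL(\gr{X})=\gr{Y}$.

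First I would record that $\AL(\gr{X})$ and $\gr{Y}$ are mixed orientations of one and the same undirected graph. Indeed, since $\gr{X}$ is a mixed orientation of $\gr{G}$ we have $\Gamma(\gr{X})=\Gamma(\gr{G})$, so by \Cref{def:lgamma} the graph $\AL(\gr{X})$ is a mixed orientation of $L(\Gamma(\gr{G}))$; on the other hand $\gr{Y}$ is, by assumption, a mixed orientation of $\AL(\gr{G})$ and hence of $L(\Gamma(\gr{G}))$ as well. Consequently both $\HG$\hyp{}matrices are indexed, with respect to the fixed reference order, by the same vertex set---the edge set of $\gr{G}$---so the entrywise comparison above is meaningful as a comparison of two orientations of a common graph.

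The decisive step is the injectivity of the assignment $\gr{D}\mapsto\HG(\gr{D})$ over all mixed orientations of a fixed undirected graph. This is exactly the faithfulness noted after \Cref{balpha}: because $\gamma=e^{\frac{2\pi}{3}i}\in\CC\setminus\RR$, the four candidate values $0,\,1,\,\gamma,\,\overline{\gamma}$ of an off\hyp{}diagonal entry $(\HG)_{u,v}$ are pairwise distinct, so each such entry unambiguously encodes whether $uv$ is a non\hyp{}edge, a digon, an arc $u\to v$, or an arc $v\to u$. Reading off $\HG(\AL(\gr{X}))=\HG(\gr{Y})$ edge by edge therefore shows that $\AL(\gr{X})$ and $\gr{Y}$ carry the identical adjacency type on every pair of vertices, whence $\AL(\gr{X})=\gr{Y}$; this is precisely the assertion $\gr{Y}=\AL(\gr{X})$, so $\gr{X}$ is a root of $\gr{Y}$. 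I do not expect a genuine obstacle, as every numerical ingredient is already in place; the one point deserving a line of care is verifying the shared underlying graph, so that the matrix identity can be promoted from a numerical coincidence to an equality of mixed graphs.
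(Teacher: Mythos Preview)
Your argument is correct, and it takes a different route from the paper's own proof. The paper argues directly at the level of matrix entries: it fixes an off-diagonal entry $(\HG(\gr{Y}))_{i,j}$ equal to $\gamma$ (say), observes that the corresponding inner product in $\someIncid^\ast\someIncid$ consists of a single nonzero term, enumerates which combinations of incidence values in column $i$ and column $j$ of $\someIncid$ produce the value $\gamma$, and matches each combination to one of the rows of the construction table in \Cref{fig:lgamma_constr}; the remaining entry values are then dismissed as analogous. In contrast, you bypass this case analysis entirely by invoking the forward identity of \Cref{thm:incidmatrixprod}(i) and then appealing to the injectivity of $\gr{D}\mapsto\HG(\gr{D})$ on mixed orientations of a fixed underlying graph.

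What your approach buys is economy and clarity: the entrywise computation has already been carried out once in establishing \Cref{thm:incidmatrixprod}, so there is no need to repeat it; the faithfulness of $\HG$ (which follows immediately from $\gamma\notin\RR$) does the rest. The paper's approach, on the other hand, is self-contained and makes the local mechanism visible---one sees explicitly how each orientation pattern at a shared vertex of $\gr{X}$ is recovered from the value of the inner product---which is pedagogically useful but logically redundant given the earlier theorem. Your care in verifying that $\AL(\gr{X})$ and $\gr{Y}$ share the same underlying graph $L(\Gamma(\gr{G}))$, so that the matrix equality is a comparison of two orientations of one graph under a common indexing, is the only nontrivial point, and you handle it correctly.
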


\begin{proof}
Consider entry $(i,j)$ of $\HG(\gr{Y})$ and assume it equals $\gamma$. 
It must match the inner product of column $i$ of $\someIncid^\ast$ and row $j$ of $\someIncid$.
By the structure of $\someIncid$, a only nonzero term in this product can only arise if edge $i$ is adjacent to edge $j$.
The only possible outcomes are $\gamma^2 \cdot \gamma^2$, $\gamma \cdot 1$, $1 \cdot \gamma$. They correspond
to the cases $x\stackrel{i}{\rightarrow} y \stackrel{j}{\text{---}} z$, $x \stackrel{i}{\leftarrow} y \stackrel{j}{\text{---}} z$, $x \stackrel{i}{\text{---}} y \stackrel{j}{\rightarrow} z$, respectively. Hence the edge $ij$
was locally created by a $\gamma$\hyp{}line graph operation. 
The remaining cases follow in the same straightforward manner.
\end{proof}

Although providing \Cref{thm:incidmatrixprod} has been the initial goal for the development of $\gamma$\hyp{}line graphs,
the notion of $\gamma$\hyp{}line graphs occurs to be a natural choice to the degree that
other well-known results from the domain of undirected line graphs carry over easily. 
The following \namecref{thm:alcharpoly} on characteristic polynomials $\chi$ is but one such result:

\begin{theorem}\label{thm:alcharpoly}
Let $\gr{X}$ be a mixed graph. If $\AL(\gr{X})$ is $k$-regular with $n$ vertices  and $m$ edges, then
\begin{align}
   \chi(\HG(\AL(\gr{X})), \lambda) = (\lambda + 2)^{m-n} \chi(\HG(\gr{X}), \lambda + 2 - k).
\end{align}
\end{theorem}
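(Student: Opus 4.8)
The plan is to mirror the classical proof of the analogous identity for line graphs of undirected graphs, replacing the ordinary incidence and adjacency matrices by their $\gamma$\hyp{}analogues and feeding everything through \Cref{thm:incidmatrixprod}. Write $B=B^\gamma(\gr{X})$, an $n\times m$ complex matrix whose rows are indexed by $V(\gr{X})$ and whose columns are indexed by $E(\gr{X})$. By \Cref{thm:incidmatrixprod} the two Gram products of $B$ are
\[
  B B^\ast = \HG(\gr{X}) + D, \qquad B^\ast B = \HG(\AL(\gr{X})) + 2I_m,
\]
with $D=\diag((\deg(v))_{v\in V(\gr{X})})$. Since $\gr{X}$ is $k$\hyp{}regular, every diagonal entry of $D$ equals $k$, so $D=kI_n$ and hence $B B^\ast=\HG(\gr{X})+kI_n$. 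The whole proof then reduces to comparing the characteristic polynomials of the two Hermitian matrices $BB^\ast$ (size $n$) and $B^\ast B$ (size $m$).

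First I would record how subtracting a scalar matrix shifts the argument of the characteristic polynomial: writing $\chi(M,\lambda)=\det(\lambda I-M)$, the two displayed identities give
\[
  \chi(B^\ast B,\lambda)=\chi(\HG(\AL(\gr{X})),\lambda-2), \qquad \chi(B B^\ast,\lambda)=\chi(\HG(\gr{X}),\lambda-k).
\]
Next I would invoke the standard fact that for any $n\times m$ complex matrix the nonzero eigenvalues of $BB^\ast$ and $B^\ast B$ agree with multiplicities, in the precise polynomial form
\[
  \chi(B^\ast B,\lambda)=\lambda^{\,m-n}\,\chi(B B^\ast,\lambda).
\]
Combining the three displays and substituting $\lambda\mapsto\lambda+2$ yields exactly
\[
  \chi(\HG(\AL(\gr{X})),\lambda)=(\lambda+2)^{\,m-n}\,\chi(\HG(\gr{X}),\lambda+2-k).
\]
Here $m=kn/2\ge n$ whenever $k\ge2$, so the exponent $m-n$ is a genuine nonnegative integer.

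The only nonroutine ingredient, and hence the step I would treat most carefully, is the polynomial identity $\chi(B^\ast B,\lambda)=\lambda^{m-n}\chi(B B^\ast,\lambda)$ in the complex (Hermitian) setting. I would establish it by a Schur\hyp{}complement computation, evaluating the determinant of the $(n+m)\times(n+m)$ block matrix
\[
  \begin{pmatrix} \lambda I_n & B \\ B^\ast & I_m \end{pmatrix}
\]
in its two natural factorisations: clearing the lower\hyp{}right block gives $\det(\lambda I_n-B B^\ast)$, while clearing the upper\hyp{}left block gives $\lambda^{\,n-m}\det(\lambda I_m-B^\ast B)$ (valid for $\lambda\neq0$, hence as an identity of polynomials). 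Rearranging produces the claimed relation. With this identity in hand everything else is the elementary spectrum\hyp{}shift bookkeeping above, so the theorem follows; the substance of the argument is entirely carried by \Cref{thm:incidmatrixprod} together with this rectangular\hyp{}matrix determinant lemma.
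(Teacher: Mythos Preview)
Your argument is correct and matches the paper's intended proof: the paper simply cites Theorem~3.8 of \cite{Biggs}, and that classical argument is exactly the incidence-matrix/Schur-complement route you have spelled out via \Cref{thm:incidmatrixprod}. (Note that you have, rightly, read the hypothesis as ``$\gr{X}$ is $k$-regular with $n$ vertices and $m$ edges'', which is what makes $D=kI_n$; the paper's wording ``$\AL(\gr{X})$'' in the statement appears to be a slip.)
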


\begin{proof}
Similar to the proof of Theorem 3.8 in \cite{Biggs}.
\end{proof}

\section{Root orientations}\label{sec:root}

Given a line graph $H$, an (undirected) graph $\gr{G}$ satisfying $L(\gr{G})=H$ is called a root of $H$.
Likewise, given a $\gamma$\hyp{}line graph $\gr{Y}$, a mixed graph $\gr{X}$ satisfying $\AL(\gr{X})=\gr{Y}$ is called a root of $\gr{Y}$.
According to Whitney's Isomorphism Theorem (cf.\ \cite{Whitney}), any (undirected) line graph usually has exactly one root. The only exception is $C_3$, which has two roots,
namely itself and the star $K_{1,3}$. Therefore, given a $\gamma$\hyp{}line graph $\gr{Y}$, we immediately know 
all graphs $\gr{G}$ such that $L(\gr{G})=\Gamma(\gr{Y})$, i.e.\ the underlying graphs of all roots of $\gr{Y}$.
What remains to be decided is which mixed orientations $\gr{X}$ of such an undirected root $\gr{G}$ actually satisfy $L(\gr{X})=\gr{Y}$.

In view of \Cref{thm:identincidroot}, the equation $B^\ast B=\HG(\gr{Y})+2I$ can be used to derive necessary conditions
on the sought mixed orientation $\gr{X}$, in the sense that the initial choice of orienting any edge of $\gr{G}$ -- by way of propagating the
conditions along the paths of a spanning tree containing that edge -- necessarily determines all other mixed edge orientations. To this
end, we present the following construction:

\begin{construction}\label{con:mixroot}
Let $\gr{Y}$ be a $\gamma$\hyp{}line graph and let $\gr{G}$ be a root of $\Gamma(\gr{Y})$.
Construct a matrix $B$ as follows:
\begin{enumerate}
\item Initialize $B=0$.
\item\label{item:edgeini} Fix any initial edge $uv$ of $\gr{G}$ and arbitrarily set
$B_{u,uv}=\overline{B_{v,uv}}\in\{\gamma, \gamma^2, 1\}$.
\item Construct a spanning tree $\gr{T}$ of $\gr{G}$ containing the edge $uv$.
\item\label{item:mainstep} Let $u_1u_2$ be an edge whose mixed orientation has already been decided, i.e.\ $B_{u_1,u_1u_2}\not= 0 \not= {B_{u_2,u_1u_2}}$,
and let $u_2u_3$ be an adjacent edge not yet decided upon, i.e.\ $B_{u_2,u_2u_3}=0={B_{u_3,u_2u_3}}$.
  \begin{enumerate}
  \item Set $B_{u_2,u_2u_3}:=B_{u_2,u_1u_2}(\HG(\gr{Y}))_{u_2,u_3}$.
	\item Then set $B_{u_3,u_2u_3}:=\overline{B_{u_2,u_2u_3}}$.
  \end{enumerate}
\item Repeat step \ref{item:mainstep} until all edges of $\gr{T}$ have been decided upon.
\end{enumerate}
\end{construction}

\begin{theorem}\label{thm:rootconstr}
Let $\gr{Y}$ be a $\gamma$\hyp{}line graph and let $\gr{G}$ be a root of $\Gamma(\gr{Y})$.
Fix any initial edge $uv$ of $\gr{G}$ and 
use \Cref{con:mixroot} to construct a reference output matrix $B$. Then the following statements are equivalent:
\begin{enumerate}
\item[(i)]
The matrix $B$ can be augmented such that it constitutes the $\gamma$\hyp{}incidence matrix of some
mixed orientation $\gr{X}$ of $\gr{G}$ such that $\AL(\gr{X})=\gr{Y}$ and $uv$ has the chosen initial
orientation in $\gr{X}$.
\item[(ii)]Constructing the set of all output matrices of \Cref{con:mixroot} for the graph $\gr{G}$ such that
the two initial entries defined in step \ref{item:edgeini} agree with the corresponding entries in the reference matrix $B$,
one finds that, whenever one of the output matrices has a non-zero entry, all output matrices have the same
non-zero entry in that position.
\end{enumerate}
\end{theorem}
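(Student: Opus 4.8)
The plan is to read both statements through the single identity $R^\ast R=\HG(\gr{Y})+2I$ together with the root test of \Cref{thm:identincidroot}, treating the update rule in \Cref{con:mixroot} as the forced local solution of that identity. Indeed, if one expands the $(e_1,e_2)$-entry of $R^\ast R$ for a $\gamma$\hyp{}incidence matrix $R$, only the term at the common vertex $w$ of two adjacent edges $e_1,e_2$ survives, so $(R^\ast R)_{e_1,e_2}=\overline{R_{w,e_1}}R_{w,e_2}$; since $|R_{w,e_1}|=1$, the requirement $(R^\ast R)_{e_1,e_2}=(\HG(\gr{Y}))_{e_1,e_2}$ is equivalent to $R_{w,e_2}=R_{w,e_1}(\HG(\gr{Y}))_{e_1,e_2}$, which is exactly the assignment made in \Cref{con:mixroot}.

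For (i)$\Rightarrow$(ii) I would set $C=B^\gamma(\gr{X})$ for the assumed root $\gr{X}$; by \Cref{thm:incidmatrixprod} we have $C^\ast C=\HG(\gr{Y})+2I$, so $C$ satisfies the displayed relation at every adjacent pair. Since the reference run begins with the prescribed orientation of $uv$, induction along its spanning tree shows $B=C$ on the support of $B$. Any run in the family of (ii) then begins at an edge carrying the correct $C$-value, and, each propagation step being forced by the relation above, reproduces $C$ on its own support. Hence every output matrix coincides with $C$ where it is non-zero, so any two agree on common non-zero entries, which is precisely (ii).

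For (ii)$\Rightarrow$(i) I would first use (ii) to assemble one global matrix $R$: for each edge $e$ pick any admissible run whose spanning tree contains $e$ (e.g.\ start at $uv$ through a spanning tree carrying both $uv$ and $e$) and let $R$ take the values that run assigns to $e$; by (ii) this is independent of the run, so $R$ is well defined, extends $B$, and---since its entries lie in $\{1,\gamma,\gamma^2\}$ with $R_{x,e}=\overline{R_{y,e}}$---equals $B^\gamma(\gr{X})$ for a unique mixed orientation $\gr{X}$ of $\gr{G}$ realising the prescribed orientation of $uv$. It then remains to verify $R^\ast R=\HG(\gr{Y})+2I$: the diagonal is $2$, the zero-nonzero pattern matches as in \Cref{thm:alphabeta2}, and for an adjacent pair $e_1,e_2$ meeting at $w$ I must recover $R_{w,e_2}=R_{w,e_1}(\HG(\gr{Y}))_{e_1,e_2}$. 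I would do this by exhibiting a single admissible run that decides $e_2$ directly from $e_1$: choose a spanning-tree edge $g$ of $B$ incident to $w$, start the run at $g$ (its initial entries agree with $B$), and extend $\{g,e_1,e_2\}$ to a spanning tree; the propagation $g\to e_1\to e_2$ at $w$ produces exactly that relation for the run's entries, which by (ii) equal those of $R$. A final appeal to \Cref{thm:identincidroot} gives $\gr{Y}=\AL(\gr{X})$, so $\gr{X}$ is the sought root and (i) holds.

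The step I expect to be the main obstacle is this last verification, and specifically the guarantee that any adjacent pair $e_1,e_2$ can be joined by one admissible run deciding one edge directly from the other. Routing this propagation out of the fixed edge $uv$ breaks down exactly when $uv,e_1,e_2$ form a triangle, since no spanning tree can carry all three; this is the reason (ii) must be allowed to range over every starting edge consistent with $B$, not merely over $uv$. The fix is to launch the run from an edge $g$ incident to the shared vertex $w$, exploiting that the edges through $w$ form a star and hence can always be appended consecutively to a tree without creating a cycle. Pinning down this routing cleanly---together with the degenerate cases $g\in\{e_1,e_2\}$ and the harmless reversal of direction when the relation is read from $e_2$ to $e_1$---is where the real work lies; the remaining bookkeeping is identical to that already used for \Cref{thm:incidmatrixprod,thm:identincidroot}.
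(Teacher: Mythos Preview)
Your proposal is correct and follows the same approach as the paper: interpret each output matrix of \Cref{con:mixroot} as a partial necessary condition via the identity $R^\ast R=\HG(\gr{Y})+2I$, merge them under the consistency hypothesis, and argue that the full family of admissible runs covers every adjacent edge pair so that the identity is verified globally (then invoke \Cref{thm:identincidroot}). You are in fact more careful than the paper at the one delicate point---the paper simply asserts that considering all output matrices fully verifies $B^\ast B$, whereas you correctly isolate the triangle obstruction that forces (ii) to range over starting edges other than $uv$, and you supply the explicit fix by launching a run from an edge $g$ of the reference spanning tree incident to the shared vertex $w$.
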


\begin{proof}
Each output matrix created in (ii) conveys a partial result on how to necessarily orient a mixed orientation $\gr{X}$ of $\gr{G}$
such that $\AL(\gr{X})=\gr{Y}$ such that the initial edge is oriented as chosen. 
Only if all partial results agree with one another with respect to the
obtained non-zero entries is it possible to merge them into the $\gamma$\hyp{}incidence matrix of the only possible mixed orientation $\gr{X}$ of $\gr{G}$
that can have the desired properties. However, one needs to verify that $B^\ast B=\HG(\gr{Y})+2I$ is actually satisfied.
To this end, step \ref{item:mainstep} of \Cref{con:mixroot} only ensures that the inner product of the conjugated row $u_2$ and the row $u_3$ yields 
a valid partial result towards matching the goal equation. In general, the procedure does not verify every single case of edge adjacency in $\gr{X}$, i.e.\
it does not utilize every relevant single inner product formed 
in the product $B^\ast B$. However, considering all of the inner products considered for all output matrices that have been generated as described,
it follows that $B^\ast B=\HG(\gr{Y})+2I$ has been fully verified. The converse direction is obvious.
\end{proof}

\begin{corollary}
Any $\gamma$\hyp{}line graph $\gr{Y}$ has at most 3 mixed root orientations $\gr{X}$ such that $\AL(\gr{X})=\gr{Y}$.
\end{corollary}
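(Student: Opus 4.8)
The plan is to derive the bound directly from \Cref{thm:rootconstr}, using the principle that a mixed root is completely determined once the orientation of a single edge has been fixed. First I would invoke the discussion of Whitney's Isomorphism Theorem preceding \Cref{con:mixroot}: outside the exceptional case $\Gamma(\gr{Y})=C_3$, the undirected line graph $\Gamma(\gr{Y})$ has a unique root $\gr{G}$, so every mixed root $\gr{X}$ of $\gr{Y}$ is a mixed orientation of this one graph $\gr{G}$. It therefore suffices to bound the number of mixed orientations of $\gr{G}$ satisfying $\AL(\gr{X})=\gr{Y}$.

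Fixing an arbitrary edge $uv$ of $\gr{G}$, I would observe that in any mixed orientation $uv$ is realized in exactly three ways, matching the three admissible values $B_{u,uv}\in\{1,\gamma,\gamma^2\}$ from step \ref{item:edgeini} of \Cref{con:mixroot}: a digon, or an arc in either direction. The crux is that this single choice already forces a complete orientation. Running \Cref{con:mixroot} fixes every tree edge deterministically through step \ref{item:mainstep}, and the orientation of each remaining non-tree edge is then dictated by its incidence with an already-oriented tree edge via the corresponding entry of $\HG(\gr{Y})$. Thus the rule sending a mixed root $\gr{X}$ to the orientation of $uv$ in $\gr{X}$ takes values in a three-element set.

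To convert this into the bound I would check injectivity: if two mixed roots $\gr{X}_1,\gr{X}_2$ orient $uv$ identically, then \Cref{con:mixroot} produces the same tree-edge entries for both, and the non-tree forcing exploited in \Cref{thm:rootconstr} assigns the same orientation to every remaining edge, whence $\gr{X}_1=\gr{X}_2$. An injection into a three-element set yields at most three mixed orientations of $\gr{G}$, which establishes the claim whenever the undirected root is unique.

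The step I expect to be the real obstacle is the exceptional case $\Gamma(\gr{Y})=C_3$, where Whitney's theorem supplies two undirected roots, $C_3$ itself and the star $K_{1,3}$. The edge-by-edge argument bounds the orientations of each root by three separately, so a crude count would only give six; closing the gap back to three requires comparing, on this one small graph, which mixed orientations of $C_3$ and of $K_{1,3}$ can possibly yield the same $\gamma$-line graph $\gr{Y}$. This is a finite, explicit case check rather than a structural argument, and it is the single point where the ``one edge pins down the whole orientation'' reasoning does not on its own suffice.
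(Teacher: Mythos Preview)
Your core argument is correct and is exactly what the paper has in mind: the corollary is stated without proof as an immediate consequence of \Cref{thm:rootconstr}, and the point is precisely that there are three choices for the initial edge in step~\ref{item:edgeini} of \Cref{con:mixroot}, each of which determines at most one mixed orientation of the root.

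Where you go astray is in reading the corollary as a statement about \emph{all} mixed roots, across all underlying undirected roots of $\Gamma(\gr{Y})$. In context --- placed between \Cref{thm:rootconstr} and \Cref{thm:bip_three_roots}, both of which begin ``let $\gr{G}$ be a root of $\Gamma(\gr{Y})$'' --- the corollary counts mixed orientations of a \emph{fixed} root $\gr{G}$. Under that reading the Whitney detour is unnecessary and the $C_3$ exception never arises. Moreover, the finite case check you propose would actually \emph{fail}: if $\gr{Y}$ is the all-digon $C_3$, then three orientations of $K_{1,3}$ (all digons; all arcs out of the centre; all arcs into the centre) have $\gamma$\hyp{}line graph $\gr{Y}$, and so does the all-digon orientation of $C_3$ itself, giving four mixed roots in your broad sense. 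So the ``obstacle'' you identify is not a gap to be closed but a signal that the statement must be read per fixed $\gr{G}$; your first three paragraphs, with the opening reduced to ``let $\gr{G}$ be the given root'', already constitute the full proof.
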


\begin{theorem}\label{thm:bip_three_roots}
Let $\gr{Y}$ be a $\gamma$\hyp{}line graph and let $\gr{G}$ be a root of $\Gamma(\gr{Y})$.
If $\gr{G}$ is bipartite, then there exist exactly three mixed root orientations $\gr{X}$ such that $\AL(\gr{X})=\gr{Y}$, otherwise there exists exactly one such orientation.
\end{theorem}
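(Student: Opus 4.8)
The plan is to parametrise all valid root orientations by diagonal rescalings of a single reference incidence matrix, and then to reduce the counting to a consistency/parity question on $\gr{G}$ that is governed precisely by bipartiteness. Throughout I assume, as the use of a spanning tree in \Cref{con:mixroot} implicitly requires, that $\gr{G}$ is connected. Since $\gr{Y}$ is a $\gamma$\hyp{}line graph we have $\gr{Y}=\AL(\gr{X}_0)$ for some mixed graph $\gr{X}_0$, whence $\Gamma(\gr{Y})=L(\Gamma(\gr{X}_0))$; by Whitney's Isomorphism Theorem the root $\gr{G}$ coincides with $\Gamma(\gr{X}_0)$ (the single exceptional pair $\{C_3,K_{1,3}\}$ being checked by hand), so $\gr{X}_0$ furnishes a reference orientation of $\gr{G}$ whose $\gamma$\hyp{}incidence matrix $B:=B^\gamma(\gr{X}_0)$ satisfies $B^\ast B=\HG(\gr{Y})+2I$ by \Cref{thm:incidmatrixprod}. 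This already secures existence of at least one root orientation in both cases.

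Next I would characterise every root orientation. By \Cref{thm:identincidroot} a mixed orientation $\gr{X}$ of $\gr{G}$ satisfies $\AL(\gr{X})=\gr{Y}$ exactly when its $\gamma$\hyp{}incidence matrix $\someIncid$ fulfils $\someIncid^\ast\someIncid=\HG(\gr{Y})+2I=B^\ast B$. The crucial claim is that every such $\someIncid$ has the form $\someIncid=DB$ for a diagonal matrix $D=\diag((\gamma^{f(v)})_{v\in V(\gr{G})})$ with $f\colon V(\gr{G})\to\ZZ/3\ZZ$ satisfying $f(u)+f(v)\equiv 0\pmod 3$ on every edge $uv$. To see this, recall from the proof of \Cref{thm:alphabeta2} that each off\hyp{}diagonal entry of $\someIncid^\ast\someIncid$ (and of $B^\ast B$) arises from a single non\hyp{}zero term $\overline{\someIncid_{w,e_1}}\,\someIncid_{w,e_2}$ at the common vertex $w$ of two incident edges $e_1,e_2$. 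Writing $r_{w,e}:=\someIncid_{w,e}/B_{w,e}$, equality of the two products forces $\overline{r_{w,e_1}}\,r_{w,e_2}=1$, so $r_{w,e}$ is independent of $e$ and defines a single scalar $\gamma^{f(w)}$ per vertex; the column\hyp{}conjugacy $\someIncid_{u,e}=\overline{\someIncid_{v,e}}$ of an incidence matrix then yields $f(u)+f(v)\equiv 0$. Conversely, any such $D$ gives a matrix $DB$ with $(DB)^\ast(DB)=B^\ast B$ (as $D^\ast D=I$) and the correct column structure, hence the $\gamma$\hyp{}incidence matrix of a genuine orientation $\gr{X}$ with $\AL(\gr{X})=\gr{Y}$.

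It then remains to count admissible functions $f$, and here bipartiteness enters. The edge relation $f(v)\equiv -f(u)$ propagates along any walk, so in the connected graph $\gr{G}$ the function $f$ is determined by the value $c:=f(v_0)$ at a fixed base vertex, with $f(v)\equiv(-1)^{\ell}c$ along any $v_0$\hyp{}$v$ walk of length $\ell$. Consistency around a closed walk of length $\ell$ demands $c\equiv(-1)^{\ell}c$: if $\gr{G}$ is bipartite every cycle is even and all three values $c\in\{0,1,2\}$ are admissible, whereas an odd cycle forces $2c\equiv 0$, i.e.\ $c\equiv 0$ and $f\equiv 0$. Since distinct $f$ produce distinct entries of $DB$ on the edge positions, and hence distinct orientations, I obtain exactly three root orientations when $\gr{G}$ is bipartite (the reference one $f\equiv 0$ together with the two assignments that are constant on each side of the bipartition) and exactly one otherwise.

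I expect the main obstacle to be the rigidity step, namely proving that an arbitrary valid $\someIncid$ must be a vertex\hyp{}diagonal rescaling $DB$ of the reference matrix rather than differing edge\hyp{}by\hyp{}edge; this is where the single\hyp{}term structure of the off\hyp{}diagonal entries of $B^\ast B$ from \Cref{thm:alphabeta2} does the essential work, by pinning the ratio $\someIncid_{w,e}/B_{w,e}$ to a quantity depending only on the vertex $w$. A secondary point requiring care is the existence of the reference orientation for the prescribed root, i.e.\ dealing with Whitney's single exceptional root pair $\{C_3,K_{1,3}\}$, which can be dispatched by direct inspection.
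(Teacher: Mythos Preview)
Your argument is correct and lands on the same mechanism as the paper: any two root orientations differ by a vertex-indexed diagonal rescaling whose exponents alternate sign across each edge, so the count is governed by bipartiteness. The route, however, is organised differently. The paper leans on \Cref{con:mixroot} and \Cref{thm:rootconstr}: it fixes an initial edge, propagates the forced entries of the incidence matrix along a spanning tree, and then argues that changing the initial choice multiplies the rows at odd and even tree-distance by conjugate powers of $\gamma$, which is globally consistent precisely when $\gr{G}$ has no odd cycle. You instead solve the equation $\someIncid^\ast\someIncid=B^\ast B$ directly, using the single-term structure of the off-diagonal entries (from \Cref{thm:alphabeta2}) to pin the ratio $\someIncid_{w,e}/B_{w,e}$ to a vertex-only quantity, and then count the admissible $\ZZ/3\ZZ$-labellings. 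Your version is more self-contained (it does not require the tree construction or its verification theorem) and it simultaneously yields the statement of the subsequent corollary about the diagonal matrix $D$; the paper's version, on the other hand, integrates naturally with the surrounding machinery already set up in Section~\ref{sec:root}. The rigidity step you flagged as the main obstacle is indeed the heart of the matter and your handling of it is sound; the degree-one vertex case is harmless since there the ratio is trivially a single value.
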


\begin{proof}
Changing the choice of mixed orientation for the initial edge effectively means multiplying $B_{u,uv}$ by $\gamma$ (resp.\ $\gamma^2$) 
and $B_{v,uv}$ by $\gamma^2$ (resp.\ $\gamma$).
As a consequence, in view of \Cref{con:mixroot}, any edge $e_{v,w}$ needs to adjust its orientation accordingly, effectively
multiplying $B_{v,vw}$ by $\gamma$ (resp.\ $\gamma^2$).
Thus, the entire row $v$ gets multiplied by this factor. Moreover, along with $B_{v,vw}$
we need to adjust $B_{w,vw}$ and this multiply it by $\gamma^2$ (resp.\ $\gamma$), in order to retain $B$ as a $\gamma$\hyp{}incidence matrix. 
Considering the propagation process along the branches of the spanning tree $\gr{T}$, away from the
initial edge, we see that all rows $y$ corresponding of vertices $y$ having odd distance from $u$ in $\gr{T}$ need to be multiplied 
by $\gamma$ (resp.\ $\gamma^2$), whereas all rows corresponding to even distance vertices need to be multiplied by $\gamma^2$ (resp.\ $\gamma$) 
-- as a necessary adjustment. Choosing a different spanning tree might, however, result in different correction factors.
With respect to \Cref{thm:rootconstr}, we see that the procedure described above yields a valid mixed root orientation if any only if
the graph $\gr{G}$ is bipartite (regardless of which of the two options for reorientation we chose at the start), i.e.\ if it does not contain any
odd cycles.
\end{proof}

\begin{corollary}
Let $\gr{Y}$ be a $\gamma$\hyp{}line graph and let $\gr{G}$ be a root of $\Gamma(\gr{Y})$.
Given two mixed orientations $\gr{X}$ and $\gr{X}'$ of $\gr{G}$ such that $\AL(\gr{X})=\AL(\gr{X}')=\gr{Y}$,
there exists a diagonal matrix $\gr{D}$ with diagonal entries $\gamma, \gamma^2$ only such
that $DB=B'$ (where $B$ and $B'$ are the $\gamma$\hyp{}incidence matrices of $\gr{X}$ and $\gr{X}'$, respectively).
\end{corollary}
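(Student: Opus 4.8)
The plan is to build the diagonal matrix $D$ one vertex at a time, reading off each diagonal entry as the local correction factor between the two incidence matrices, and then to prove that this factor does not depend on which incident edge is used to read it off. The device that makes everything fit together is the fact that $B^\ast B$ and $B'^\ast B'$ coincide.

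First I would invoke \Cref{thm:incidmatrixprod}(i) for both roots: since $\AL(\gr{X})=\AL(\gr{X}')=\gr{Y}$, we have
$$B^\ast B = \HG(\gr{Y}) + 2I = B'^\ast B'.$$
Because $B$ and $B'$ are $\gamma$\hyp{}incidence matrices of two mixed orientations of the \emph{same} undirected graph $\gr{G}$, they share the zero\hyp{}nonzero pattern of the incidence matrix of $\gr{G}$; in particular $B_{v,e}=0$ precisely when $B'_{v,e}=0$. Next I would extract the crucial pointwise relation. Fix two adjacent edges $i,j$ of $\gr{G}$, that is, edges sharing a common vertex $v$, which is unique since $\gr{G}$ is simple. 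Evaluating the off\hyp{}diagonal entry $(i,j)$ of $B^\ast B$ leaves a single surviving term, so
$$\overline{B_{v,i}}\,B_{v,j} = (\HG(\gr{Y}))_{i,j} = \overline{B'_{v,i}}\,B'_{v,j},$$
where the right\hyp{}hand equality comes from the analogous single\hyp{}term evaluation of $B'^\ast B'$.

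The core step is to turn this into a well\hyp{}defined per\hyp{}vertex factor. For each incident pair $(v,e)$ I set $d_{v,e} := B'_{v,e}\,\overline{B_{v,e}}$, a quotient of two cube roots of unity and hence itself an element of $\{1,\gamma,\gamma^2\}$. Substituting $B'_{v,i}=d_{v,i}B_{v,i}$ and $B'_{v,j}=d_{v,j}B_{v,j}$ into the displayed equality and cancelling the nonzero factor $\overline{B_{v,i}}\,B_{v,j}$ yields $\overline{d_{v,i}}\,d_{v,j}=1$, i.e.\ $d_{v,i}=d_{v,j}$. Thus all edges incident to a fixed vertex $v$ yield the same factor, which I would christen $D_v$ (with $D_v$ chosen arbitrarily should $v$ happen to be isolated). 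This independence of the incident edge is exactly the point at which the hypothesis $\AL(\gr{X})=\AL(\gr{X}')$ is consumed, and I expect it to be the main obstacle: without the shared $\gamma$\hyp{}line graph there would be no reason for the local corrections at a vertex to agree across its incident edges.

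Finally I would assemble $D=\diag((D_v)_{v\in V(\gr{G})})$ and verify $DB=B'$ entrywise. For incident $(v,e)$ this is $D_v B_{v,e}=B'_{v,e}$ by construction, and for non\hyp{}incident $(v,e)$ both sides vanish by the shared pattern. Hence $DB=B'$ with $D$ diagonal and each $D_v\in\{1,\gamma,\gamma^2\}$. To match the stated restriction to $\{\gamma,\gamma^2\}$, I would appeal to \Cref{thm:bip_three_roots}: whenever $\gr{X}\neq\gr{X}'$ the two orientations differ by the tree\hyp{}propagated reorientation described there, whose row multipliers alternate between $\gamma$ and $\gamma^2$ and are never $1$; the degenerate case $\gr{X}=\gr{X}'$ is trivial with $D=I$.
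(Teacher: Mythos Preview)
Your argument is correct and takes a somewhat different route from the paper. In the paper the corollary is not given a separate proof; it is read off directly from the proof of \Cref{thm:bip_three_roots}, where the propagation along a spanning tree already exhibits the passage from one root to another as multiplication of each row of $B$ by $\gamma$ or $\gamma^2$ according to the parity of the distance to the initial vertex. You instead give a self-contained construction of $D$ from the single identity $B^\ast B=\HG(\gr{Y})+2I=B'^\ast B'$: you form the local ratio $d_{v,e}=B'_{v,e}\overline{B_{v,e}}$ and use the off-diagonal equality $\overline{B_{v,i}}B_{v,j}=\overline{B'_{v,i}}B'_{v,j}$ to show that $d_{v,e}$ depends only on $v$. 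This yields $DB=B'$ with $D_v\in\{1,\gamma,\gamma^2\}$ without any spanning-tree machinery. Both approaches are valid; the paper's is more economical in context, while yours makes transparent that the existence of the diagonal transition matrix is a purely local consequence of the shared $\gamma$\hyp{}line graph. Note that for the final exclusion of the value $1$ you do fall back on the propagation description in \Cref{thm:bip_three_roots}, so at that point the two arguments reconverge (and the uniqueness of $D$ on non-isolated vertices, which follows from $D_vB_{v,e}=B'_{v,e}$, guarantees your $D$ agrees with the one produced there). Your remark on the degenerate case $\gr{X}=\gr{X}'$ is fair: then $D=I$, which strictly speaking does not have entries in $\{\gamma,\gamma^2\}$; this is a harmless boundary case of the statement rather than a defect of your proof.
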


\section{Line graph orientations}\label{sec:linegraphorient}

Next, we shall shift our perspective a little. Given an arbitrary mixed orientation $\gr{Y}$ of some line graph $H=L(\gr{G})$, we will now
investigate under which conditions we can guarantee that there exists a mixed orientation $\gr{X}$ of $\gr{G}$ such that $L(\gr{X})=\gr{Y}$. First let us recall
the following \namecref{def:cliq}:

\begin{definition}\label{def:cliq}
Given an undirected graph $\gr{G}$, a system $Q_1,\ldots,Q_k$ of cliques of $\gr{G}$ is called a complete system of cliques if the following conditions are satisfied:
\begin{itemize}
\item[(i)] For $i\not= j$ we have $\vert Q_i \cap Q_j\vert \leq 1$.
\item[(ii)] Every vertex of $\gr{G}$ is contained in exactly two of the cliques.
\item[(iii)] If $\vert Q_i \cap Q_j\vert = 1$, then $\vert Q_i\vert + \vert Q_j\vert = \deg(u)+2$, where $\{u\}=Q_i \cap Q_j$.
\end{itemize}
\end{definition}

Note that \Cref{def:cliq} permits trivial cliques, i.e.\ some cliques may be isomorphic to $K_1$.
Next we state a classic characterization of line graphs:

\begin{theorem}[cf.\ \cite{Ray}]\label{thm:cliquesys}
An undirected graph is a line graph if and only if it admits a complete system of cliques.
\end{theorem}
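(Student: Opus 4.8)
The plan is to prove both implications by passing between the cliques of a complete system and the vertices of a root graph. For the forward direction I would assume $\gr{G}=L(\gr{H})$ for some undirected simple graph $\gr{H}$ and exhibit a complete system explicitly: for each vertex $v\in V(\gr{H})$ let $Q_v$ be the set of all edges of $\gr{H}$ incident with $v$. Since any two edges sharing the endpoint $v$ are adjacent in $\gr{H}$, each $Q_v$ is a clique of $L(\gr{H})$, and I take $(Q_v)_{v\in V(\gr{H})}$ as the candidate system. Condition (i) then holds because two distinct vertices of the simple graph $\gr{H}$ have at most one common incident edge; condition (ii) holds because every edge of $\gr{H}$ has exactly two endpoints; and condition (iii) follows from the degree identity $\deg_{L(\gr{H})}(e)=\deg_{\gr{H}}(a)+\deg_{\gr{H}}(b)-2$ for $e=\{a,b\}$, which rearranges to $|Q_a|+|Q_b|=\deg(u)+2$ with $u=e$ the shared vertex of $\gr{G}$.

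For the converse I would start from a complete system $Q_1,\ldots,Q_k$ and build a root graph $\gr{H}$ whose vertex set is $\{Q_1,\ldots,Q_k\}$. For each vertex $v$ of $\gr{G}$, condition (ii) supplies exactly two cliques $Q_i,Q_j$ containing it, and I declare $e_v:=\{Q_i,Q_j\}$ to be an edge of $\gr{H}$. Condition (i) guarantees that distinct vertices $v\neq w$ cannot lie in the same unordered pair of cliques (otherwise $|Q_i\cap Q_j|\geq 2$), so $v\mapsto e_v$ is a bijection $V(\gr{G})\to E(\gr{H})$ and $\gr{H}$ is simple with no loops. It then remains to verify that this bijection is an isomorphism $\gr{G}\to L(\gr{H})$, i.e.\ that $v$ and $w$ are adjacent in $\gr{G}$ exactly when $e_v$ and $e_w$ share a common clique. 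The ``if'' direction is immediate, since a shared clique $Q_i$ places both $v$ and $w$ inside a complete subgraph of $\gr{G}$.

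The main obstacle is the ``only if'' direction of this equivalence: I must show every edge of $\gr{G}$ lies inside one of the cliques, for otherwise two adjacent vertices could fail to share a clique and the isomorphism would break. This is exactly what condition (iii) delivers. Fixing a vertex $v$ lying in cliques $Q_i,Q_j$, condition (i) forces $Q_i\cap Q_j=\{v\}$, so the neighbours of $v$ inside $Q_i$ and inside $Q_j$ are disjoint and number $(|Q_i|-1)+(|Q_j|-1)=|Q_i|+|Q_j|-2$; condition (iii) equates this with $\deg(v)$, showing that the two cliques at $v$ already account for all of its incident edges. Hence every edge of $\gr{G}$ is covered by a clique, adjacency in $\gr{G}$ forces a shared clique, and the bijection is the desired isomorphism. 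The only residual care is the handling of low-degree vertices through the permitted trivial cliques, treating the system as an indexed family so that, for instance, an isolated vertex of $\gr{G}$ corresponds to a $K_2$-component of $\gr{H}$; this is dispatched by the same counting identity.
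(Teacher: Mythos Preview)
Your argument is correct and follows the standard proof of this classical characterization. Note, however, that the paper does not supply its own proof of this statement: the theorem is quoted with a citation (to Ray\hyp{}Chaudhuri) and used as a known tool, so there is no in\hyp{}paper argument to compare against. Your construction of the cliques $Q_v$ for the forward direction and the recovery of the root graph from the clique system for the converse are exactly the usual ones, and your use of condition~(iii) to force every edge of $\gr{G}$ into one of the cliques is the crux of the argument and is handled cleanly.
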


\begin{definition}\label{hwalk}
	Let $\gr{D}$ be a mixed graph and $H=H^\alpha(\gr{D})$. With respect to $\gr{D}$ and $H$, 
  the value $\hal(W)$ of a mixed walk $W$ with vertices $v_1,v_2,\ldots,v_k$ is defined as
		\begin{align}
		\hal(W) = (H_{v_1v_2}H_{v_2v_3}H_{v_3v_4}\cdots H_{v_{k-1}v_k}) \in\{\alpha^r\}_{r\in\mathbb{Z}}.
		\end{align}
\end{definition}

\begin{theorem}\label{thm:weight1}
Let $\gr{X}$ be a mixed graph and $\gr{Y}=\AL(\gr{X})$. Further, let $Q_1,\ldots,Q_k$ be a complete system of cliques of $\Gamma(\gr{Y})$ and
let $C$ be a cycle in $Q_i$ (for some $i\in\{1,\ldots,k\}$). Then the mixed cycle $\vec{C}$ in $\gr{Y}$ that corresponds to $C$ 
has weight $\hal(\vec{C})=1$ in $\gr{Y}$.
\end{theorem}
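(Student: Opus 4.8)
The plan is to reduce the weight of $\vec{C}$ to a telescoping product of incidence-matrix entries that collapses to $1$. First I would set $B=B^\gamma(\gr{X})$ and invoke \Cref{thm:incidmatrixprod}(i), which gives $B^\ast B=\HG(\gr{Y})+2I$. Reading off an off-diagonal entry, for distinct vertices $e,f$ of $\gr{Y}$ (i.e.\ distinct edges of $\gr{X}$) I get $(\HG(\gr{Y}))_{e,f}=(B^\ast B)_{e,f}=\sum_{w}\overline{B_{w,e}}\,B_{w,f}$. Because $\gr{X}$ orients the simple graph $\gr{G}=\Gamma(\gr{X})$, two distinct edges meet in at most one vertex, so at most one summand survives; when $e$ and $f$ share the common vertex $w$ this yields the factorization $(\HG(\gr{Y}))_{e,f}=\overline{B_{w,e}}\,B_{w,f}$, which is the engine of the whole argument.

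The next step is to recognize that a single root vertex governs the entire clique $Q_i$. By \Cref{thm:cliquesys} together with \Cref{def:cliq}, a complete system of cliques of $\Gamma(\gr{Y})=L(\gr{G})$ is exactly the family of vertex-cliques of the root $\gr{G}$: the (nontrivial) clique $Q_i$ consists of all edges of $\gr{G}$ incident with one fixed vertex $w$. Hence every vertex of $\vec{C}$ is an edge of $\gr{X}$ through $w$, and consecutive edges of the cycle $C=e_1,\ldots,e_m,e_1$ share precisely $w$. This is what licenses applying the factorization above with one and the same $w$ at every step.

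With these two observations in hand, the computation is routine: unwinding \Cref{hwalk},
\begin{equation*}
\hal(\vec{C})=\prod_{j=1}^{m}(\HG(\gr{Y}))_{e_j,e_{j+1}}=\prod_{j=1}^{m}\overline{B_{w,e_j}}\,B_{w,e_{j+1}}=\Bigl(\prod_{j=1}^{m}\overline{B_{w,e_j}}\Bigr)\Bigl(\prod_{j=1}^{m}B_{w,e_j}\Bigr)=\prod_{j=1}^{m}\bigl|B_{w,e_j}\bigr|^{2},
\end{equation*}
where indices are taken modulo $m$ (so $e_{m+1}=e_1$) and the middle products agree because $e_2,\ldots,e_m,e_1$ is merely a cyclic shift of $e_1,\ldots,e_m$. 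Since \Cref{balpha} forces each nonzero $B_{w,e_j}\in\{1,\gamma,\gamma^2\}$ to have modulus $1$, every factor is $1$ and $\hal(\vec{C})=1$.

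I expect the main obstacle to be not the telescoping but the clique-to-vertex identification: the product collapses only because all edges of $C$ radiate from a common root vertex $w$, so that each $B_{w,e_{j+1}}$ is immediately cancelled by the conjugate factor contributed at the following step. This hinges on the complete system of cliques being the one induced by $\gr{G}=\Gamma(\gr{X})$. The point deserving care is the non-unique root $C_3$, whose alternative root $K_{1,3}$ distributes the triangle across three different junction vertices rather than one; there, however, the cliques stemming from $\Gamma(\gr{X})$ are trivial and contain no cycle, so no instance of the statement is actually lost.
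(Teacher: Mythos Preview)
Your argument is correct and is essentially identical to the paper's proof: both identify the clique $Q_i$ with the set of edges of $\gr{X}$ through a single root vertex, use $B^\ast B=\HG(\gr{Y})+2I$ to factor each entry $(\HG(\gr{Y}))_{e_j,e_{j+1}}=\overline{B_{w,e_j}}\,B_{w,e_{j+1}}$, and then telescope. Your added remark that the telescoping relies on the complete system of cliques being the one induced by $\Gamma(\gr{X})$ is well taken; the paper's proof makes the same tacit assumption when it writes ``the clique $Q_i$ \ldots corresponds to the star subgraph induced by the edges incident some vertex $r$ in $\Gamma(\gr{X})$''.
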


\begin{proof}
Under the assumptions of the theorem, the clique $Q_i$ of $\gr{Y}$ corresponds to the star subgraph induced by the edges incident some vertex $r$ in $\Gamma(\gr{X})$.
Suppose that $C$ is traversed by $rs_1,rs_2,\ldots, rs_m, rs_1$ (where $s_1,\ldots,s_m\in V(\gr{X})$). Let $\vec{C}$ be the mixed cycle in $\gr{Y}$ that
corresponds to $C$.
Using any traversal direction, we compute the weight of $\vec{C}$ as
\begin{align}
\begin{aligned}
\hal(\vec{C}) & = \HG(\gr{Y})_{rs_1,rs_2}\HG(\gr{Y})_{rs_2,rs_3}\cdots \HG(\gr{Y})_{rs_m,rs_1} \\
           & = ([B]_{rs_1}^\ast[B]_{rs_2})([B]_{rs_2}^\ast[B]_{rs_3})\cdots ([B]_{rs_m}^\ast[B]_{rs_1}) \\
					 & = \overline{B_{r,rs_1}}B_{r,rs_2}\overline{B_{r,rs_2}}B_{r,rs_3}\cdots \overline{B_{r,rs_m}}B_{r,rs_1} 
					 = 1,
\end{aligned}
\end{align}
where $[B]_e$ denotes the column of the incidence matrix $B$ of $\gr{X}$ that corresponds to the edge $e$.
\end{proof}

\begin{example}
When checking whether some mixed orientations of a given line graph $L(\gr{G})$ is actually the mixed line graph of some mixed orientation
of the root $\gr{G}$, the necessary condition stated in \Cref{thm:weight1} can be used as a first check.
For example, the mixed graph shown in \Cref{fig:invalidmix2} fails to satisfy the condition (cf.\ the triangle with vertices 0-1,0-2,0-4).
\end{example}

\begin{figure}
  \begin{center}
	\begin{subfigure}{.4\textwidth}
		\centering
\raisebox{0mm}{\scalebox{0.68}{
\begin{tikzpicture}
\node[mynode] (v0) at (0.9459459459459459,4.1891891891891895) {0};
\node[mynode] (v1) at (4.324324324324325,4.1891891891891895) {1};
\node[mynode] (v2) at (4.324324324324325,1.4864864864864864) {2};
\node[mynode] (v3) at (7.702702702702703,0.13513513513513514) {3};
\node[mynode] (v4) at (0.9459459459459459,1.4864864864864864) {4};
\node[mynode] (v5) at (7.027027027027027,2.8378378378378377) {5};
\node[mynode] (v6) at (9.054054054054054,4.864864864864865) {6};
\draw[myedge] (v0) edge (v1) {};
\draw[myedge] (v0) edge (v2) {};
\draw[myedge] (v0) edge (v4) {};
\draw[myedge] (v1) edge (v2) {};
\draw[myedge] (v2) edge (v3) {};
\draw[myedge] (v2) edge (v4) {};
\draw[myedge] (v2) edge (v5) {};
\draw[myedge] (v3) edge (v5) {};
\draw[myedge] (v5) edge (v6) {};
\end{tikzpicture}
}
}
		\caption{Root graph $\gr{G}$}
		\label{fig:invalidmix1}
	\end{subfigure}\hspace*{.05\textwidth}
	\begin{subfigure}{.4\textwidth}
		\centering
\raisebox{0mm}{\scalebox{0.6}{
\begin{tikzpicture}
\node[mynode] (v0) at (2.2972972972972974,6.216216216216216) {0-1};
\node[mynode] (v1) at (0.9459459459459459,2.8378378378378377) {0-4};
\node[mynode] (v2) at (5.0,4.1891891891891895) {1-2};
\node[mynode] (v3) at (2.972972972972973,4.1891891891891895) {0-2};
\node[mynode] (v4) at (5.675675675675675,0.13513513513513514) {2-3};
\node[mynode] (v5) at (9.054054054054054,0.13513513513513514) {3-5};
\node[mynode] (v6) at (2.972972972972973,1.4864864864864864) {2-4};
\node[mynode] (v7) at (7.027027027027027,2.8378378378378377) {2-5};
\node[mynode] (v8) at (9.054054054054054,4.1891891891891895) {5-6};
\draw[myarc] (v0) edge (v1) {};
\draw[myarc] (v2) edge (v0) {};
\draw[myarc] (v3) edge (v0) {};
\draw[myedge] (v1) edge (v3) {};
\draw[myarc] (v1) edge (v6) {};
\draw[myedge] (v2) edge (v3) {};
\draw[myarc] (v2) edge (v4) {};
\draw[myarc] (v6) edge (v2) {};
\draw[myarc] (v2) edge (v7) {};
\draw[myarc] (v3) edge (v4) {};
\draw[myarc] (v6) edge (v3) {};
\draw[myarc] (v3) edge (v7) {};
\draw[myarc] (v5) edge (v4) {};
\draw[myarc] (v6) edge (v4) {};
\draw[myedge] (v4) edge (v7) {};
\draw[myarc] (v7) edge (v5) {};
\draw[myedge] (v5) edge (v8) {};
\draw[myarc] (v7) edge (v6) {};
\draw[myarc] (v7) edge (v8) {};
\end{tikzpicture}		
}
}
		\caption{Mixed orientation of $L(\gr{G})$}
		\label{fig:invalidmix2}
	\end{subfigure}
	\end{center}
	\caption{Mixed orientation of a line graph that is not a mixed line graph}
	\label{fig:invalidmix}
\end{figure}

As can be seen from \Cref{thm:weight1}, the existence of feasible root orientations can be linked
to the algebraic properties of the cycles in the candidate $\gamma$\hyp{}line graph.
The goal of the remainder of this section is to prove that, for a line graph $L(\gr{T})$ of a tree $\gr{T}$, it
suffices to have a mixed $\gamma$\hyp{}monograph orientation $\gr{Y}$ in order to guarantee the existence
of a mixed root orientation $\gr{X}$ of $\gr{T}$ such that $\AL(\gr{X})=\gr{Y}$. This will be shown in \Cref{thm:monoroot}.
In preparation of this \namecref{thm:monoroot}, we first require some auxiliary results on monographs.

\begin{definition}[cf.\ \cite{hspec}]\label{halphw}
	Let $\gr{D}$ be a mixed graph.
	\begin{enumerate}
	\item[(i)]
  $\gr{D}$ is called an $\alpha$\hyp{}monograph if $\hal(\vec{C})=1$ for all its cycles $C$, where
	$\vec{C}$ denotes an arbitrary closed traversal walk on $C$.
	\item[(ii)]
  The $\alpha$\hyp{}store $S^\alpha(u)$ of  $u\in V(\gr{D})$ is defined as
	\begin{align}
  S^\alpha(u)=\{ \hal(W):\ \text{$W$ is a closed walk in $\gr{D}$ from/to $u$}\}.
	\end{align}
	\end{enumerate}
\end{definition}

Monographs capture the idea of transporting values along the edges of a mixed graph. One starts by assigning a seed value 
to some initial vertex. Spreading along a forward arc, the value at its terminal vertex will be $\alpha$ times the value at its initial vertex.
For a backward arc, the factor is $\overline{\alpha}$. For a digon, the factor is $1$. The required factors are easily looked up
in the $\alpha$\hyp{}Hermitian adjacency matrix. Trivially, trees are $\alpha$\hyp{}monographs. 

Clearly, $1\in S^\alpha(u)$ so that $\vert S^\alpha(u)\vert\geq 1$.
It is not hard to see that the store content is independent of the reference vertex $u$, i.e.\ $S^\alpha(u) = S^\alpha(v)$
for any $u,v\in V(\gr{D})$. Therefore, we may simply speak of `the' $\alpha$\hyp{}store of $\gr{D}$.
Using the store idea, $\alpha$\hyp{}monographs can be characterized as follows:

\begin{theorem}[cf.\ \cite{hspec}]\label{stchar}
Let $\gr{D}$ be a connected mixed graph. Then the following statements are equivalent:
\begin{itemize}\item[]\begin{enumerate}[label=(\roman*)]
\item \label{stchar5} $\gr{D}$ is an $\alpha$\hyp{}monograph.
\item \label{stchar4} $\hal(W')=\hal(W'')$ for every pair $W',W''$ of mixed walks sharing the same start and end vertices.
\item \label{stchar2} $\vert S^\alpha(u)\vert=1$ for every $u\in V(\gr{D})$.
\end{enumerate}\end{itemize}
\end{theorem}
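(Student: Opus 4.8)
The plan is to establish the three conditions cyclically, proving \ref{stchar5} $\Rightarrow$ \ref{stchar2} $\Rightarrow$ \ref{stchar4} $\Rightarrow$ \ref{stchar5}. Throughout I would rely on three elementary properties of the weight function $\hal$. First, it is multiplicative under concatenation: if a walk $W_1$ ends where $W_2$ begins, then $\hal(W_1 W_2) = \hal(W_1)\hal(W_2)$, immediately from \Cref{hwalk}. Second, since $H^\alpha$ is Hermitian and every edge entry has modulus $1$, reversing a walk conjugates its weight, i.e.\ $\hal(\cev{W}) = \overline{\hal(W)}$. Third, every weight lies on the unit circle, so $\overline{\hal(W)} = \hal(W)^{-1}$.

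The heart of the argument is \ref{stchar5} $\Rightarrow$ \ref{stchar2}, for which I would prove the auxiliary claim that in an $\alpha$\hyp{}monograph \emph{every} closed walk (not merely every cycle) has weight $1$. I would argue by induction on the number of edges of a closed walk $W = (v_0, v_1, \ldots, v_\ell = v_0)$. If $W$ has no repeated vertex apart from $v_0 = v_\ell$, then either $\ell \le 2$ -- in which case $W$ is trivial or a single backtrack $u,v,u$ with weight $H_{uv}\overline{H_{uv}} = 1$ -- or $W$ is a genuine cycle, whose weight is $1$ by hypothesis. Otherwise some intermediate vertex repeats, say $v_i = v_j$ with $0 \le i < j \le \ell$ and $(i,j)\ne(0,\ell)$, so that $0 < j - i < \ell$; splitting $W$ at this repetition yields an inner closed walk $v_i, \ldots, v_j$ and an outer closed walk $v_0, \ldots, v_i, v_{j+1}, \ldots, v_\ell$, both strictly shorter, with $\hal(W)$ equal to the product of their weights by multiplicativity. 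The inductive hypothesis then gives $\hal(W) = 1$. Since $S^\alpha(u)$ collects exactly the weights of closed walks based at $u$, the claim forces $S^\alpha(u) = \{1\}$, hence $\vert S^\alpha(u)\vert = 1$ for every $u$.

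The remaining two implications are short. For \ref{stchar2} $\Rightarrow$ \ref{stchar4}, given walks $W'$ and $W''$ with common endpoints $u$ and $v$, I would form the closed walk $W'\cev{W''}$ based at $u$; its weight $\hal(W')\overline{\hal(W'')}$ lies in $S^\alpha(u) = \{1\}$, and multiplying by the unit $\hal(W'')$ gives $\hal(W') = \hal(W'')$, while connectivity of $\gr{D}$ guarantees that such walks exist for every relevant pair. For \ref{stchar4} $\Rightarrow$ \ref{stchar5}, any cycle $C$ determines a closed traversal $\vec{C}$ based at some vertex $u$, which shares its endpoints with the trivial walk at $u$ of weight $1$; condition \ref{stchar4} then yields $\hal(\vec{C}) = 1$, so $\gr{D}$ is an $\alpha$\hyp{}monograph. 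I expect the induction of the first step to be the only delicate part, the subtlety being the careful bookkeeping of the split so that both resulting closed walks are strictly shorter and their concatenated weights reproduce $\hal(W)$ exactly.
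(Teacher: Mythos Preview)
The paper does not actually supply a proof of this theorem; it is imported verbatim from the cited reference \cite{hspec}, so there is no in-paper argument against which to compare. Evaluated on its own terms, your cyclic proof \ref{stchar5} $\Rightarrow$ \ref{stchar2} $\Rightarrow$ \ref{stchar4} $\Rightarrow$ \ref{stchar5} is correct and standard: the key induction reducing arbitrary closed walks to cycles via splitting at a repeated vertex is carried out carefully (both subwalks are strictly shorter because $0<j-i<\ell$, and multiplicativity of $\hal$ reassembles the weight), and the remaining two implications are the expected one-liners using $\hal(\cev{W})=\overline{\hal(W)}$ and the trivial walk of weight~$1$. No gaps.
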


\begin{theorem}\label{thm:reorient1}
Let $\gr{X}$ be an $\alpha$\hyp{}monograph and $u\in V(\gr{X})$. Let $S_u$ be the associated store (using any seed value $S_u(u)=\alpha^k$) and
define the matrix $\OR=\diag((S_u(v))_{v\in V(\gr{X})})$. Then
\begin{align}
 \OR \HA(\gr{X}) \OR^\ast = A(\Gamma(\gr{X})).
\end{align}
\end{theorem}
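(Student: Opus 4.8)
The plan is to compare the two matrices entry by entry, exploiting that $\OR$ is diagonal. Writing $H=\HA(\gr{X})$ and $s_v:=S_u(v)$ for the store value at $v$, the diagonal structure of $\OR$ (and of its conjugate transpose) immediately gives $(\OR H \OR^\ast)_{v,w}=s_v\,H_{v,w}\,\overline{s_w}$. Since every store value lies on the unit circle, $|s_v|=1$, so multiplying by $s_v$ and $\overline{s_w}$ neither creates nor destroys zeros; the zero--nonzero pattern of $\OR H \OR^\ast$ is exactly that of $H$. As $H_{v,w}$ is zero precisely on the diagonal and on non-adjacent pairs, while $A(\Gamma(\gr{X}))$ is $1$ on adjacent pairs and $0$ elsewhere (including the diagonal), the two matrices already agree in every position where $v$ and $w$ are non-adjacent. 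It therefore remains to show that $s_v\,H_{v,w}\,\overline{s_w}=1$ whenever $vw$ is an edge of $\gr{X}$, and this single identity covers the arc and digon cases uniformly.

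The key observation is that the store value transports walk weights. Because $\gr{X}$ is an $\alpha$\hyp{}monograph, \Cref{stchar} guarantees that $\hal(W)$ depends only on the endpoints of $W$, so for any walk $W_{u\to v}$ from $u$ to $v$ we may write $s_v=\alpha^k\,\hal(W_{u\to v})$, and likewise $s_w=\alpha^k\,\hal(W_{u\to w})$ for a walk $W_{u\to w}$. Here the common seed factor $\alpha^k$ is immaterial: since $|\alpha|=1$ we have $\alpha^k\overline{\alpha^k}=1$, so the seed cancels in the product $s_v\overline{s_w}$, which is exactly why the statement holds for an arbitrary seed value. Moreover, reversing a walk conjugates its weight, because $H$ is Hermitian and each edge factor is thereby replaced by its conjugate; hence $\overline{s_w}=\overline{\alpha^k}\,\overline{\hal(W_{u\to w})}=\overline{\alpha^k}\,\hal(W_{w\to u})$, where $W_{w\to u}$ denotes the reversed walk.

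Substituting these expressions, I would interpret $s_v\,H_{v,w}\,\overline{s_w}$ as the weight $\hal(\cdot)$ of the closed walk based at $u$ that runs from $u$ to $v$ along $W_{u\to v}$, crosses the edge $vw$ (contributing the factor $H_{v,w}$), and returns from $w$ to $u$ along $W_{w\to u}$. Since $\hal$ is multiplicative under concatenation, this product equals the weight of a closed walk at $u$, and the defining property of an $\alpha$\hyp{}monograph — equivalently $S^\alpha(u)=\{1\}$ via \Cref{stchar} — forces that weight to be $1$. Thus $(\OR H \OR^\ast)_{v,w}=1=A(\Gamma(\gr{X}))_{v,w}$ for every edge $vw$, which together with the matching zero pattern establishes the desired equality.

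I do not expect a serious obstacle here, as the heavy lifting is already supplied by the characterization in \Cref{stchar}. The only points demanding care are two bookkeeping facts: that reversing a walk conjugates its weight (a direct consequence of $H$ being Hermitian) and that the shared seed factor $\alpha^k$ cancels against its conjugate. Once these are in place, the verification collapses to recognizing the product $s_v\,H_{v,w}\,\overline{s_w}$ as a closed-walk weight, which the monograph hypothesis pins down to $1$.
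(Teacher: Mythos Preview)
Your proof is correct and follows essentially the same approach as the paper: both reduce to checking that $\OR_v H_{v,w}\overline{\OR_w}=1$ on edges after noting that the zero--nonzero pattern is preserved. The paper states the key step more tersely---asserting $\OR_x/\OR_y = H_{y,x}$ directly from the definition of the store---whereas you spell out the same identity via the closed-walk interpretation and \Cref{stchar}; the underlying argument is the same.
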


\begin{proof}
Since $\OR$ is an invertible diagonal matrix, $\OR \HA(\gr{X}) \OR^\ast$ and $\HA(\gr{X})$ have the same zero-nonzero pattern. 
Therefore it suffices to prove that $\OR \HA(\gr{X}) \OR^\ast$ is a $0$-$1$-matrix.
Let $xy$ be any edge of $\alpha(\gr{X})$. Since $\gr{X}$ is an $\alpha$\hyp{}monograph, we have 
${\OR_x}/{\OR_y} = (\HA(\gr{X}))_{y,x} = \overline{(\HA(\gr{X}))_{x,y}}$
by the definition of the store $S_u$.
So we obtain 
$
    (\OR \HA(\gr{X}) \OR^\ast)_{x,y} 
	= \OR_x (\HA(\gr{X}))_{x,y} \overline{\OR}_y 
	= 1.
$
\end{proof}

\begin{remark}\label{rem:switching}
Application of \Cref{thm:reorient1} to $\gamma$\hyp{}monographs (analogously, to $\gamma^2$\hyp{}monographs) yields an edge switching procedure that
will turn any mixed graph $\gr{X}$ into its unoriented counterpart. Recall that $\gamma^2=\overline\gamma$.
Performing the multiplication $\OR \HG(\gr{X}) \OR^\ast$, for every vertex $x$
we effectively multiply its associated row in $\HG(\gr{X})$ by $S_u(x)$ and its associated column by $\overline{S_u(x)}$. 
Thus, edges adjacent to $x$ are subjected to the switching pattern depicted in \Cref{fig:switching}.
Subsequently applying this pattern to all vertices of $\gr{X}$ (in any order) yields $\Gamma(\gr{X})$.
\end{remark}

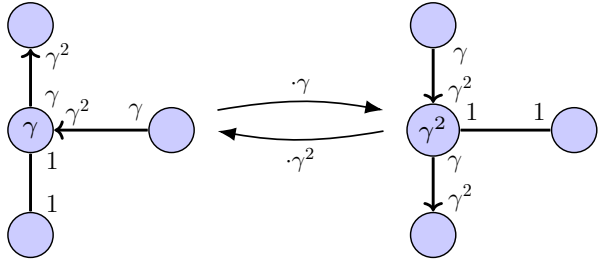
\begin{figure}
		\begin{center}
	\raisebox{0mm}{\scalebox{0.68}{
\begin{tikzpicture}
\node[mynode] (v0) at (0.9545454545454545,-3.2727272727272725) {$\gamma$};
\node[mynode] (v1) at (0.9545454545454545,-1.2272727272727273) {};
\node[mynode] (v2) at (3.6818181818181817,-3.2727272727272725) {};
\node[mynode] (v3) at (0.9545454545454545,-5.318181818181818) {};
\draw[myarc] (v0) edge (v1) {};
\draw[myarc] (v2) edge (v0) {};
\draw[myedge] (v0) edge (v3) {};
\draw[font=\Large] (v0) to node[xshift=15, very near end] {$\gamma^2$} 
                         node[xshift=12,very near start] {$\gamma$} (v1);
\draw[font=\Large] (v0) to node[yshift=10,very near end] {$\gamma$} 
                         node[yshift=10, near start] {$\gamma^2$} (v2);
\draw[font=\Large] (v0) to node[xshift=12,very near end] {$1$} 
                         node[xshift=12,very near start] {$1$} (v3);
\end{tikzpicture}
}
}
		\scalebox{0.8}{%
    \begin{tikzpicture}
\node (v0a) at (0,1) {};
\node (v1a) at (3,1) {};
\node (v0b) at (0,1.3) {};
\node (v1b) at (3,1.3) {};
\node (dummy) at (0,-1) {};
\draw[]  (v0b)  edge [-{Latex[length=3mm]},bend left =10,thick] node[midway,above] {$\cdot\gamma$} (v1b) ;
\draw[]  (v0a)  edge [{Latex[length=3mm]}-,bend right =10,thick] node[midway,below] {$\cdot\gamma^2$} (v1a) ;
				\end{tikzpicture}}
	\raisebox{0mm}{\scalebox{0.68}{
\begin{tikzpicture}
\node[mynode] (v0) at (0.9545454545454545,-3.2727272727272725) {$\gamma^2$};
\node[mynode] (v1) at (0.9545454545454545,-1.2272727272727273) {};
\node[mynode] (v2) at (3.6818181818181817,-3.2727272727272725) {};
\node[mynode] (v3) at (0.9545454545454545,-5.318181818181818) {};
\draw[myarc] (v1) edge (v0) {};
\draw[myedge] (v0) edge (v2) {};
\draw[myarc] (v0) edge (v3) {};
\draw[font=\Large] (v0) to node[xshift=15,very near end] {$\gamma$} 
                         node[xshift=15, near start] {$\gamma^2$} (v1);
\draw[font=\Large] (v0) to node[yshift=10,very near end] {$1$} 
                         node[yshift=10,very near start] {$1$} (v2);
\draw[font=\Large] (v0) to node[xshift=15, near end] {$\gamma^2$} 
                         node[xshift=12,very near start] {$\gamma$} (v3);
\end{tikzpicture}
}
}
			\caption{Pattern for switching an $\alpha$\hyp{}monograph into its undirected counterpart} 
			\label{fig:switching}
		\end{center}
\end{figure}

\begin{theorem}\label{thm:reorient2}
Let $\gr{G}$ be a graph. Further, let $\alpha\in\CC\setminus\RR$ with $\vert\alpha\vert=1$. Given any matrix $\OR=\diag((\OR_v)_{v\in V(\gr{X})})$ such that
$\OR_v = \alpha^k \OR_u$ for some $k\in\{-1,0,1\}$ is satisfied for all $uv\in E(\gr{G})$, we have:
\begin{itemize}
\item[(i)] $\OR^\ast A(\gr{G}) \OR$ is the $\alpha$\hyp{}adjacency matrix of a mixed graph $\gr{X}$,
\item[(ii)] $\Gamma(\gr{X})=G$,
\item[(iii)] $\gr{X}$ is an $\alpha$\hyp{}monograph.
\end{itemize}
\end{theorem}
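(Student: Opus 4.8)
The plan is to set $H := \OR^\ast A(\gr{G})\,\OR$ and verify directly that this matrix has exactly the shape prescribed by \Cref{halpha}, then read the mixed graph $\gr{X}$ off its entries, and finally dispatch the monograph property by a telescoping computation. First I would record the entrywise formula: since $\OR$ is diagonal, $H_{u,v} = \overline{\OR_u}\,A(\gr{G})_{u,v}\,\OR_v$. From this I would read off three facts. The matrix is Hermitian, because $A(\gr{G})$ is real and symmetric, so $H^\ast = \OR^\ast A(\gr{G})^\ast\,\OR = \OR^\ast A(\gr{G})\,\OR = H$. Its diagonal vanishes, since $A(\gr{G})$ has zero diagonal (no loops). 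And for an off-diagonal position with $uv\in E(\gr{G})$ the hypothesis gives $\OR_v=\alpha^k\OR_u$ with $k\in\{-1,0,1\}$, whence, using $|\OR_u|=1$, $H_{u,v}=\overline{\OR_u}\,\OR_v=\alpha^k|\OR_u|^2=\alpha^k\in\{\overline{\alpha},1,\alpha\}$ (recall $\alpha^{-1}=\overline{\alpha}$ because $|\alpha|=1$), while $H_{u,v}=0$ whenever $uv\notin E(\gr{G})$.

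This establishes part (i): every entry of $H$ lies in $\{0,1,\alpha,\overline{\alpha}\}$, the diagonal is zero, and $H$ is Hermitian, so $H$ is precisely the $\alpha$\hyp{}Hermitian adjacency matrix of a \emph{unique} mixed graph $\gr{X}$ (faithfulness of the encoding, guaranteed by $\alpha\notin\RR$, is what makes $\gr{X}$ well defined). Concretely, $\gr{X}$ has a digon on $uv$ when $H_{u,v}=1$, an arc $u\to v$ when $H_{u,v}=\alpha$, and an arc $v\to u$ when $H_{u,v}=\overline{\alpha}$. Part (ii) then follows immediately: since each $\OR_v\neq 0$, multiplying by $\overline{\OR_u}$ and $\OR_v$ neither creates nor destroys nonzero entries, so $H_{u,v}\neq 0$ exactly when $A(\gr{G})_{u,v}\neq 0$; hence $\Gamma(\gr{X})$, whose edges are the off-diagonal nonzero positions of $H$, has exactly the edges of $\gr{G}$, i.e.\ $\Gamma(\gr{X})=\gr{G}$.

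For part (iii) I would compute the weight of an arbitrary cycle of $\gr{X}$ from the entry formula. Traversing a cycle as $v_1,v_2,\ldots,v_m,v_1$, every consecutive pair is an edge, so by \Cref{hwalk}
\[
  \hal(\vec{C}) \;=\; \prod_{i=1}^{m} H_{v_i,v_{i+1}} \;=\; \prod_{i=1}^{m}\overline{\OR_{v_i}}\,\OR_{v_{i+1}},
\]
with indices read cyclically ($v_{m+1}=v_1$). This product telescopes: each interior factor $\OR_{v_{i+1}}\overline{\OR_{v_{i+1}}}=|\OR_{v_{i+1}}|^2=1$, and the leftover $\overline{\OR_{v_1}}\OR_{v_1}=|\OR_{v_1}|^2=1$, so $\hal(\vec{C})=1$. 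As this holds for every cycle, it is exactly the defining condition of an $\alpha$\hyp{}monograph in \Cref{halphw}, proving (iii). (One could equivalently phrase the same telescoping for \emph{every} closed walk and invoke the store characterization, but the cycle version suffices.)

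All three computations are routine; the only point demanding genuine care is the normalization. The stated hypothesis $\OR_v=\alpha^k\OR_u$ only forces adjacent diagonal entries to share a common modulus, so to land \emph{exactly} in $\{0,1,\alpha,\overline{\alpha}\}$ rather than in a common scalar multiple thereof, one must use that the $\OR_v$ are unimodular -- consistent with their origin as store values (powers of $\alpha$) in \Cref{thm:reorient1}. With $|\OR_v|=1$ in force, the entry classification in (i) and the telescoping in (iii) both close up cleanly, and there is no further substantive obstacle.
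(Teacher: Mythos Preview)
Your proposal is correct and follows essentially the same route as the paper: write $H=\OR^\ast A(\gr{G})\,\OR$, observe that it is Hermitian with the same zero--nonzero pattern as $A(\gr{G})$ and that the condition on $\OR$ forces nonzero entries into $\{1,\alpha,\overline{\alpha}\}$, then prove (iii) by the telescoping product $\prod_i \overline{\OR_{v_i}}\,\OR_{v_{i+1}}=1$. Your write-up is in fact a bit more careful than the paper's, since you explicitly flag that the hypothesis as stated only constrains the \emph{ratios} $\OR_v/\OR_u$ and so the unimodularity $|\OR_v|=1$ must be assumed (as it is in every application) for both the entry classification in (i) and the telescoping in (iii) to land on the value $1$; the paper's proof uses this silently.
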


\begin{proof}
Clearly, $H:=\OR^\ast A(\gr{G}) \OR$ is a Hermitian matrix with the same zero-nonzero-pattern as $A(\gr{G})$. By the condition imposed on $\OR$ it follows
that all nonzero entries in $H$ must be from the set $\{1, \alpha, \overline{\alpha}\}$. Hence, (i) and (ii) have been proven.
With respect to (iii), let $C=v_1v_2v_3\ldots v_kv_1$ be any cycle in $\gr{G}$. Then the weight of $C$ is
\begin{align}
  \hal(C) = H_{v_1,v_2} H_{v_2,v_3} \cdots H_{v_k,v_1}  = (\overline{\OR_{v_1}}\OR_{v_2})(\overline{\OR_{v_2}}\OR_{v_3}) \cdots (\overline{\OR_{v_{k}}}\OR_{v_1}) = 1.
\end{align}
\end{proof}

\begin{corollary}
Every graph has a nontrivial mixed orientation that yields an $\alpha$\hyp{}monograph.
\end{corollary}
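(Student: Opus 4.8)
The plan is to reduce the statement to \Cref{thm:reorient2} by engineering a suitable diagonal matrix $\OR$. That \namecref{thm:reorient2} tells us that \emph{any} diagonal matrix $\OR=\diag((\OR_v)_{v\in V(\gr{G})})$ whose entries satisfy $\OR_v=\alpha^k\OR_u$ with $k\in\{-1,0,1\}$ across every edge $uv$ produces, via the congruence $\OR^\ast A(\gr{G})\OR$, the adjacency matrix of a mixed orientation $\gr{X}$ of $\gr{G}$ that is automatically an $\alpha$-monograph with $\Gamma(\gr{X})=\gr{G}$. Hence the entire task collapses to exhibiting one such $\OR$ whose associated orientation contains at least one arc, so that the resulting orientation is nontrivial.

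First I would dispose of the degenerate case: if $\gr{G}$ has no edges there is nothing to orient, so I assume $\gr{G}$ has at least one edge and fix an endpoint $w$ of some edge. Then I would define $\OR$ by setting $\OR_w=\alpha$ and $\OR_v=1$ for every $v\neq w$. To invoke \Cref{thm:reorient2} I need to verify its edge condition: across any edge avoiding $w$ both endpoint values equal $1$, so their ratio is $1=\alpha^0$; across any edge $wv$ incident to $w$ the ratio of endpoint values is $\alpha^{\pm1}$, where I use that $\alpha^{-1}=\overline{\alpha}$ since $\vert\alpha\vert=1$. In every case the exponent lies in $\{-1,0,1\}$, so \Cref{thm:reorient2} applies and yields a mixed orientation $\gr{X}$ of $\gr{G}$ that is an $\alpha$-monograph.

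It then remains only to confirm nontriviality. Writing $H:=\OR^\ast A(\gr{G})\OR$ and using that $\OR$ is diagonal, the entry belonging to an edge $wv$ incident to $w$ is $H_{w,v}=\overline{\OR_w}\,A(\gr{G})_{w,v}\,\OR_v=\overline{\alpha}$. Because $\alpha\in\CC\setminus\RR$ we have $\overline{\alpha}\neq 1$, so by \Cref{halpha} the edge $wv$ is realized as an arc rather than a digon in $\gr{X}$. Thus $\gr{X}$ has at least one arc and the orientation is nontrivial, which completes the argument.

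I do not anticipate any genuine obstacle here, since \Cref{thm:reorient2} already encapsulates all the work; the only thing to design is a single location where consecutive diagonal values differ by a factor of $\alpha$, and pinning the value $\alpha$ on one endpoint while leaving all other vertices at $1$ is the most economical way to force exactly that. The two mild points worth flagging explicitly are the empty-graph convention and the observation that $\overline{\alpha}\neq1$ (guaranteed precisely by $\alpha\notin\RR$), as it is this inequality that converts the altered entry into a true arc and thereby secures nontriviality.
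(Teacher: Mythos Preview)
Your argument is correct and is precisely the approach the paper intends: the corollary is stated without proof, as an immediate consequence of \Cref{thm:reorient2}, and your choice of $\OR_w=\alpha$, $\OR_v=1$ for $v\neq w$ is the most direct way to make that consequence explicit. The only thing you add beyond what the paper leaves tacit is the careful check that $\overline{\alpha}\neq 1$ forces a genuine arc, which is exactly what ``nontrivial'' demands.
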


In the case of $\gamma$\hyp{}monographs, the matrices $\OR$ encountered in \Cref{thm:reorient1,thm:reorient2} play an important role in describing 
the relation between a mixed root graph $\gr{G}$ and its mixed line graph $\AL(\gr{G})$.

\begin{definition}
Let $\gr{X}$ be a $\gamma$\hyp{}monograph. Any diagonal matrix $\OR$ with entries from the set $\{0, \gamma, \gamma^2\}$ satisfying
$\OR \HG(\gr{X}) \OR^\ast=A(\Gamma(\gr{X}))$ shall be called an orientation matrix of $\gr{X}$.
\end{definition}

\begin{theorem}
Let $\gr{X}$ be a $\gamma$\hyp{}monograph and $B$ its $\gamma$-incidence matrix. Further, let $\OR$ be an orientation matrix of $\gr{X}$. 
Define the matrix $\ORd = \diag((\ORd_{uv})_{uv\in E(\Gamma(\gr{X}))})$ by $\ORd_{uv}=\OR_u B_{u,uv}$. Then,
\begin{itemize}
\item[(i)] $\OR B \ORd^\ast$ is the incidence matrix of $\Gamma(\gr{X})$.
\item[(ii)] $\ORd$ is an orientation matrix of $\AL(\gr{X})$.
\end{itemize}
\end{theorem}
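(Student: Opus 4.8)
The plan is to derive both parts from the two fundamental products in \Cref{thm:incidmatrixprod} together with the diagonal scaling identities $\OR^\ast\OR=I$ and $\ORd\ORd^\ast=I$. First I would record these scaling facts and the well-definedness of $\ORd$. All nonzero diagonal entries of an orientation matrix are cube roots of unity, as are the incidence values $B_{u,uv}\in\{1,\gamma,\gamma^2\}$ attached to an incident pair; hence each $\ORd_{uv}=\OR_uB_{u,uv}$ is again a unit-modulus cube root of unity, which places $\ORd$ in the entry range required of an orientation matrix and gives $\OR^\ast\OR=I$ and $\ORd\ORd^\ast=I$. For well-definedness I would use that the two endpoints of any edge satisfy $B_{v,uv}=\overline{B_{u,uv}}$, while \Cref{thm:reorient1} supplies $\OR_u/\OR_v=(\HG(\gr{X}))_{v,u}=\overline{B_{u,uv}}^{\,2}$ (the last equality from \Cref{thm:incidmatrixprod}(ii)); multiplying through yields $\OR_uB_{u,uv}=\OR_vB_{v,uv}$, so $\ORd_{uv}$ does not depend on the chosen endpoint.

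For (i) I would compute $\OR B\ORd^\ast$ entrywise. Since $\OR$ and $\ORd$ are diagonal, $(\OR B\ORd^\ast)_{u,e}=\OR_uB_{u,e}\overline{\ORd_e}$, which vanishes exactly when $B_{u,e}=0$, i.e.\ when $u$ is not incident with $e$; thus the product already has the zero--nonzero pattern of the incidence matrix of $\Gamma(\gr{X})$. For an incident pair $e=uv$ I would substitute $\ORd_{uv}=\OR_uB_{u,uv}$ and invoke $|\OR_u|=|B_{u,uv}|=1$ to get $\OR_uB_{u,uv}\,\overline{\OR_uB_{u,uv}}=1$. Hence every incident entry equals $1$, so $\OR B\ORd^\ast$ is precisely the undirected $0$--$1$ incidence matrix of $\Gamma(\gr{X})$.

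For (ii) I would argue purely algebraically. Writing $\tilde B:=\OR B\ORd^\ast$ for the incidence matrix from (i), the classical undirected identity \eqref{eq:btb} gives $\tilde B^\ast\tilde B=A(L(\Gamma(\gr{X})))+2I$, and $L(\Gamma(\gr{X}))=\Gamma(\AL(\gr{X}))$ by \Cref{def:lgamma}. On the other hand $\tilde B^\ast\tilde B=\ORd B^\ast(\OR^\ast\OR)B\ORd^\ast=\ORd B^\ast B\,\ORd^\ast$ because $\OR^\ast\OR=I$. Substituting $B^\ast B=\HG(\AL(\gr{X}))+2I$ from \Cref{thm:incidmatrixprod}(i) and using $\ORd\ORd^\ast=I$ rewrites the right-hand side as $\ORd\,\HG(\AL(\gr{X}))\,\ORd^\ast+2I$. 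Comparing the two expressions for $\tilde B^\ast\tilde B$ and cancelling $2I$ leaves
\begin{align}
\ORd\,\HG(\AL(\gr{X}))\,\ORd^\ast = A(\Gamma(\AL(\gr{X}))),
\end{align}
which is exactly the defining equation of an orientation matrix.

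Finally, to legitimize the terminology I would note that $\AL(\gr{X})$ is itself a $\gamma$\hyp{}monograph: the displayed equation forces $(\HG(\AL(\gr{X})))_{x,y}=\overline{\ORd_x}\,\ORd_y$ on every edge, so the weight of any cycle telescopes to $1$, exactly as in the proof of \Cref{thm:reorient2}(iii). Combined with the entry range and $\ORd\ORd^\ast=I$ from the first step, this certifies that $\ORd$ is an orientation matrix of $\AL(\gr{X})$. I expect the only genuine bookkeeping hurdle to be that first step---verifying that $\ORd$ is well-defined and unimodular and correctly pinning down $\OR^\ast\OR=I$ and $\ORd\ORd^\ast=I$; once these are in hand, (i) is a one-line entry computation and (ii) is a formal manipulation of the two products in \Cref{thm:incidmatrixprod}.
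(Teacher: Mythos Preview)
Your proof is correct and follows essentially the same route as the paper: the entrywise computation for (i) and the chain $A(L(\Gamma(\gr{X})))=(\OR B\ORd^\ast)^\ast(\OR B\ORd^\ast)-2I=\ORd(B^\ast B-2I)\ORd^\ast=\ORd\,\HG(\AL(\gr{X}))\,\ORd^\ast$ for (ii) are exactly the paper's argument. Your additional verification that $\ORd_{uv}$ is independent of the chosen endpoint (and that $\AL(\gr{X})$ is again a $\gamma$\hyp{}monograph) fills in bookkeeping the paper leaves implicit.
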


\begin{proof}
Since $\OR$ and $\ORd$ are invertible diagonal matrices, $B':=\OR B \ORd^\ast$ and $B$ have the same zero-nonzero pattern. 
Therefore, regarding claim (i), it suffices to prove that $B'$ is a $0$-$1$-matrix. Since for any $uv\in E(\gr{X})$ we have
\begin{align}
(\OR B \ORd^\ast)_{u,uv} = \OR_u B_{u,uv} (\ORd^\ast)_{uv} = \OR_u B_{u,uv} \overline{\OR_u} \overline{B_{u,uv}} = 1
\end{align}
it follows that this is indeed the case.

To prove that $\ORd$ is an orientation matrix of $\AL(\gr{X})$ we need to assert that $\ORd \HG(\AL(\gr{X})) \ORd^\ast = A(\Gamma(\AL(\gr{X})))$.
To this end, we verify
\begin{align}
\begin{aligned}
A(\Gamma(\AL(\gr{X}))) &= A(L(\Gamma(\gr{X}))) = (\OR B \ORd^\ast)^\ast(\OR B \ORd^\ast) - 2I  \\
                  &= \ORd B^\ast \OR^\ast \OR B \ORd^\ast - 2I = \ORd(B^\ast B - 2I)\ORd^\ast \\
									&= \ORd \HG(\AL(\gr{X})) \ORd^\ast.
\end{aligned}
\end{align}
\end{proof}

\begin{theorem}\label{thm:weight2}
Let $\gr{X}$ be a mixed graph and $\gr{Y}=\AL(\gr{X})$. Further, let $\vec{C}$ be a mixed cycle in $\gr{X}$ and $\vec{C'}$ its corresponding cycle in $\gr{Y}$.
Then $\hga(\vec{C}, \gr{X}) = \hga(\vec{C}, \gr{X})$, i.e.\ the weight of $\vec{C}$ in $\gr{X}$ and the weight of $\vec{C'}$ in $\gr{Y}$ are equal (using analogous traversal direction).
\end{theorem}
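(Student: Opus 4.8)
The plan is to express both weights purely in terms of the $\gamma$-incidence matrix $B = B^\gamma(\gr{X})$ and then observe that they are products of exactly the same scalar factors, merely grouped differently. Write the traversal of $\vec{C}$ in $\gr{X}$ as $v_1, v_2, \ldots, v_k, v_1$ and let $e_i$ denote the edge $v_i v_{i+1}$ (indices modulo $k$, so that $v_{k+1} = v_1$). Under the analogous traversal, the corresponding cycle $\vec{C'}$ in $\gr{Y} = \AL(\gr{X})$ visits the vertices $e_1, e_2, \ldots, e_k, e_1$ of $\gr{Y}$. By \Cref{hwalk}, the two weights are $\hga(\vec{C}, \gr{X}) = \prod_{i=1}^k \HG(\gr{X})_{v_i, v_{i+1}}$ and $\hga(\vec{C'}, \gr{Y}) = \prod_{i=1}^k \HG(\gr{Y})_{e_i, e_{i+1}}$.

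Next I would invoke \Cref{thm:incidmatrixprod}. From part (ii) we have $\HG(\gr{X}) = B B^\ast - D$, so each off-diagonal factor $\HG(\gr{X})_{v_i, v_{i+1}}$ equals the inner product of row $v_i$ with the conjugate of row $v_{i+1}$ of $B$. As already noted in the earlier proofs, this inner product reduces to the single term coming from the unique edge shared by $v_i$ and $v_{i+1}$, namely $e_i$, giving $\HG(\gr{X})_{v_i, v_{i+1}} = B_{v_i, e_i}\, \overline{B_{v_{i+1}, e_i}}$. Dually, from part (i) we have $\HG(\gr{Y}) = B^\ast B - 2I$, so $\HG(\gr{Y})_{e_i, e_{i+1}}$ is the inner product of the conjugated column $e_i$ with column $e_{i+1}$, which again collapses to the single term coming from the unique vertex shared by the edges $e_i$ and $e_{i+1}$, namely $v_{i+1}$, giving $\HG(\gr{Y})_{e_i, e_{i+1}} = \overline{B_{v_{i+1}, e_i}}\, B_{v_{i+1}, e_{i+1}}$.

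Substituting these two identities yields
\begin{align*}
\hga(\vec{C}, \gr{X}) = \prod_{i=1}^k B_{v_i, e_i}\, \overline{B_{v_{i+1}, e_i}},
\qquad
\hga(\vec{C'}, \gr{Y}) = \prod_{i=1}^k \overline{B_{v_{i+1}, e_i}}\, B_{v_{i+1}, e_{i+1}}.
\end{align*}
Both products run over exactly the same multiset of $2k$ nonzero entries of $B$: each carries the $k$ factors $\overline{B_{v_{i+1}, e_i}}$ for $1 \le i \le k$, while the remaining $k$ factors $B_{v_i, e_i}$ of the first product are precisely the factors $B_{v_{i+1}, e_{i+1}}$ of the second product, re-indexed by $i \mapsto i+1$ modulo $k$. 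Since $\CC$ is commutative, the two products are equal, which is the claim. The main thing to get right is the bookkeeping of which single index survives in each inner product --- the shared \emph{edge} $e_i$ for the $\gr{X}$-factors and the shared \emph{vertex} $v_{i+1}$ for the $\gr{Y}$-factors; unlike the telescoping computation in \Cref{thm:weight1}, here the factors do not cancel but instead regroup into the same product.
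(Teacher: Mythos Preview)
Your proof is correct and follows essentially the same approach as the paper: both express the adjacency entries of $\gr{X}$ and of $\gr{Y}=\AL(\gr{X})$ in terms of the $\gamma$\hyp{}incidence matrix $B$ via \Cref{thm:incidmatrixprod}, and then observe that the two products consist of the same $2k$ scalar factors after a cyclic reindexing (the paper phrases this as ``moving the first term to the back''). Your explicit bookkeeping of the surviving index in each inner product is, if anything, a bit cleaner than the paper's notation-heavy version.
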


\begin{proof}
Assume that $\vec{C}$ is traversed as $u_1,u_2,\ldots,u_k,u_1$ (with $u_i\in V(\gr{X})$). 
Let $[B]_{u_i}$ denote the row of the incidence matrix $B$ of $\gr{X}$ that corresponds to the vertex $u_i$. Then,
\begin{align}\label{eq:weightvecc1}
\begin{aligned}
\hga(\vec{C}, \gr{X}) & = \HG(\gr{X})_{u_1,u_2}\HG(\gr{X})_{u_2,u_3}\cdots \HG(\gr{X})_{u_m,u_1} \\
           & = ([B]_{u_1}([B]_{u_2})^\ast)([B]_{u_2}([B]_{u_3})^\ast)\cdots ([B]_{u_m}([B]_{u_1})^\ast) \\
					 & = ((B)_{u_1,u_1u_2}\overline{(B)_{u_2,u_1u_2}})((B)_{u_2,u_2u_3}\overline{(B)_{u_3,u_2u_3}}) \cdots ((B)_{u_k,u_ku_1}\overline{(B)_{u_1,u_ku_1}}).
\end{aligned}
\end{align}
Let $[B^\ast]_{u_iu_j}$ denote the row of the matrix $B^\ast$ that corresponds to the edge $u_iu_j$.
Observe that 
\begin{align}\label{eq:bbbb}
  [B^\ast]_{u_iu_j}([B^\ast]_{u_iu_k})^\ast = (B^\ast B)_{u_iu_j,u_ju_k} = (\HG(\AL(\gr{X})))_{u_iu_j,u_ju_k}
\end{align}
for $u_i\not= u_j \not= u_k$.
Moving the first term in the final product of \eqref{eq:weightvecc1} to the back and making use of \eqref{eq:bbbb}, we get
\begin{align}\label{eq:weightvecc2}
\begin{aligned}
\hga(\vec{C}, \gr{X}) & = (\overline{(B)_{u_2,u_1u_2}}(B)_{u_2,u_2u_3})(\overline{(B)_{u_3,u_2u_3}(B)_{u_3,u_3u_4}} \cdots (\overline{(B)_{u_1,u_ku_1}} (B)_{u_1,u_1u_2}) \\
           & = ([B^\ast]_{u_1u_2}([B^\ast]_{u_2u_3})^\ast)([B^\ast]_{u_2u_3}([B^\ast]_{u_3u_4})^\ast) \cdots ([B^\ast]_{u_ku_1}([B^\ast]_{u_1u_2})^\ast) \\
					 & = (\HG(\AL(\gr{X})))_{u_1u_2,u_2u_3}(\HG(\AL(\gr{X})))_{u_2u_3,u_3u_4} \cdots (\HG(\AL(\gr{X})))_{u_ku_1,u_1u_2} \\
					 & = \hga(\vec{C'}, \gr{Y}).
\end{aligned}
\end{align}
\end{proof}

\begin{corollary}\label{thm:monoiff}
A mixed graph $\gr{X}$ is an $\alpha$\hyp{}monograph if and only if $\AL(\gr{X})$ is an $\alpha$\hyp{}monograph.
\end{corollary}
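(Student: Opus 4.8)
The plan is to obtain the equivalence directly from the two weight-transfer results \Cref{thm:weight1,thm:weight2}, after observing that the monograph property need only be tested on a family of cycles spanning the cycle space. Reversing a shared edge conjugates (hence inverts) its $\HG(\gr{Y})$-contribution, so $\hga$ is multiplicative under cycle concatenation and descends to a homomorphism from the cycle space of $\gr{Y}=\AL(\gr{X})$ into $\{\gamma^r\}_{r\in\ZZ}$; thus $\gr{Y}$ is a monograph (\Cref{halphw}) as soon as $\hga$ equals $1$ on such a spanning family. The \emph{backward} implication is then immediate: if $\gr{Y}$ is a monograph, every cycle of $\gr{Y}$ has weight $1$, in particular those cycles that correspond to cycles $\vec{C}$ of $\gr{X}$; by \Cref{thm:weight2} each such $\vec{C}$ has the same weight, so all cycles of $\gr{X}$ have weight $1$ and $\gr{X}$ is a monograph.

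For the \emph{forward} implication I would fix a complete system of cliques $Q_1,\dots,Q_k$ of $\Gamma(\gr{Y})=L(\Gamma(\gr{X}))$, which exists by \Cref{thm:cliquesys}, and split a spanning family of cycles of $\gr{Y}$ into two types. Cycles contained in a single clique $Q_i$ correspond to stars of $\gr{X}$ and carry weight $1$ by \Cref{thm:weight1}, irrespective of whether $\gr{X}$ is a monograph. Cycles inherited from cycles of $\gr{X}$ have, by \Cref{thm:weight2}, the same weight as the corresponding cycle of $\gr{X}$, which is $1$ precisely because $\gr{X}$ is a monograph. Since these two types together span the cycle space, the homomorphism property forces $\hga(\vec{C})=1$ for every cycle $\vec{C}$ of $\gr{Y}$, so $\gr{Y}$ is a monograph.

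The step I expect to be the main obstacle is justifying that these two types actually span the cycle space of $\gr{Y}$. This is the classical description of the cycle space of a line graph --- the triangles inside the cliques together with the lifts of a cycle basis of the root --- and it can be confirmed by matching their number against the cyclomatic number of $L(\gr{G})$. To sidestep this structural lemma one can instead argue the forward direction by a direct computation: writing a cycle of $\gr{Y}$ as a sequence of edges $f_1,\dots,f_\ell$ of $\gr{X}$ with $f_j$ and $f_{j+1}$ meeting at the vertex $r_j$, the identities $\HG(\gr{Y})_{f_j,f_{j+1}}=\overline{B_{r_j,f_j}}\,B_{r_j,f_{j+1}}$ and $\HG(\gr{X})_{u,v}=B_{u,uv}\,\overline{B_{v,uv}}$, read off from \Cref{thm:incidmatrixprod} as in the proofs of \Cref{thm:weight1,thm:weight2}, regroup the weight product into $\hga$ of the closed walk $r_0r_1\cdots r_{\ell-1}r_0$ in $\gr{X}$, where the stationary steps $r_{j-1}=r_j$ contribute $\lvert B_{r_j,f_j}\rvert^2=1$. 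By \Cref{stchar}, every closed walk of a monograph has weight $1$, so this closed walk, and hence the original cycle of $\gr{Y}$, has weight $1$, completing the forward direction without the cycle-space lemma.
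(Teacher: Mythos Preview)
Your argument is correct. In the paper the result is stated as a bare corollary of \Cref{thm:weight1,thm:weight2}, with no proof given; the implicit reasoning is exactly your approach (a): clique-cycles have weight $1$ by \Cref{thm:weight1}, lifted cycles inherit weight $1$ from $\gr{X}$ via \Cref{thm:weight2}, and together these should account for every cycle of $\gr{Y}$. You are right that this last step tacitly invokes the classical description of the cycle space of a line graph, which the paper neither states nor cites.

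Your approach (b) genuinely improves on this: by regrouping the product $\prod_j \overline{B_{r_j,f_j}}\,B_{r_j,f_{j+1}}$ into $\prod_j B_{r_{j-1},f_j}\,\overline{B_{r_j,f_j}}$ and recognising each non-stationary factor as $\HG(\gr{X})_{r_{j-1},r_j}$, you reduce the weight of an arbitrary cycle of $\gr{Y}$ to the $\hga$-value of a closed walk in $\gr{X}$, which is $1$ by \Cref{stchar}. This is a direct, self-contained proof of the forward implication that does not rely on any structural lemma about line-graph cycle spaces; it is in fact the natural common generalisation of the computations in the proofs of \Cref{thm:weight1} (all $r_j$ equal) and \Cref{thm:weight2} (all $r_j$ distinct). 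The backward implication via \Cref{thm:weight2} is exactly as the paper intends.
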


Given some mixed orientation $\gr{Y}$ of the line graph of a tree, a necessary condition for the existence of a mixed root is that all cycles in $\gr{Y}$ must 
satisfy the condition stated in \Cref{thm:weight1}. It turns out that this condition is actually sufficient:

\begin{theorem}\label{thm:monoroot}
Let $\gr{Y}$ be a mixed $\gamma$\hyp{}monograph such that $\Gamma(\gr{Y})=L(\gr{T})$ for some tree $\gr{T}$.
Then $\gr{Y}=\AL(\gr{X})$ for some mixed graph $\gr{X}$ with $\Gamma(\gr{X})=\gr{T}$.
\end{theorem}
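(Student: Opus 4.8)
The plan is to reduce everything to the existence of a suitable $\gamma$\hyp{}incidence matrix. Concretely, I will construct a mixed orientation $\gr{X}$ of $\gr{T}$ whose $\gamma$\hyp{}incidence matrix $B=B^\gamma(\gr{X})$ satisfies $B^\ast B=\HG(\gr{Y})+2I$; once this is achieved, \Cref{thm:incidmatrixprod}(i) gives $\HG(\AL(\gr{X}))=B^\ast B-2I=\HG(\gr{Y})$, hence $\AL(\gr{X})=\gr{Y}$ with $\Gamma(\gr{X})=\gr{T}$, as required. The starting structural observation is that, since $\gr{T}$ is a tree and in particular triangle\hyp{}free, the complete system of cliques of $L(\gr{T})=\Gamma(\gr{Y})$ guaranteed by \Cref{thm:cliquesys} consists precisely of the stars $Q_y$ formed by the edges of $\gr{T}$ incident to a common vertex $y\in V(\gr{T})$, and each vertex $e=xy$ of $\gr{Y}$ (i.e.\ each edge of $\gr{T}$) lies in exactly the two cliques $Q_x$ and $Q_y$. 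Thus an entry $B_{y,e}$ of the sought matrix should be read as a value attached to the incidence of the $\gr{T}$\hyp{}vertex $y$ with the $\gr{T}$\hyp{}edge $e$, and for adjacent $e_1,e_2$ sharing the vertex $y$ the only surviving term of the inner product gives $(B^\ast B)_{e_1,e_2}=\overline{B_{y,e_1}}B_{y,e_2}$.

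Next I would produce the required entries clique by clique. Each clique $Q_y$, regarded as an induced mixed subgraph of $\gr{Y}$, is connected and is itself a $\gamma$\hyp{}monograph, because every cycle lying inside $Q_y$ is a cycle of $\gr{Y}$ and hence has weight $1$. Therefore \Cref{stchar} (equivalently the store construction underlying \Cref{thm:reorient1}) furnishes a potential $\phi_y\colon E(Q_y)\to\{1,\gamma,\gamma^2\}$ with $\HG(\gr{Y})_{e,e'}=\overline{\phi_y(e)}\,\phi_y(e')$ for all $e,e'\in Q_y$; moreover $\phi_y$ is unique up to multiplication by a single unit factor from $\{1,\gamma,\gamma^2\}$. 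If I now set $B_{y,e}:=\phi_y(e)$, then within each clique the identity $(B^\ast B)_{e_1,e_2}=\overline{\phi_y(e_1)}\phi_y(e_2)=\HG(\gr{Y})_{e_1,e_2}$ holds automatically, so the only thing left to arrange is global consistency across cliques.

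The consistency requirement is exactly that the two values attached to a shared edge be conjugate: for every edge $e=xy$ of $\gr{T}$ I need $B_{x,e}=\overline{B_{y,e}}$, i.e.\ $\phi_x(e)\,\phi_y(e)=1$, so that column $e$ has the shape of a genuine $\gamma$\hyp{}incidence column (digon, head, or tail). Writing each $\phi_y$ as a fixed base potential times an unknown unit factor $c_y\in\{1,\gamma,\gamma^2\}$ and taking discrete logarithms base $\gamma$, these requirements become a single linear system over $\ZZ/3\ZZ$ with one equation $c_x c_y=w(e)$ (where $w(e)\in\{1,\gamma,\gamma^2\}$ is determined by the base potentials) per edge of $\gr{T}$. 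This is where the tree hypothesis does all the work: because $\gr{T}$ is acyclic, the system carries no closed\hyp{}cycle compatibility condition, so I can root $\gr{T}$, choose the factor at the root arbitrarily, and propagate uniquely along parent\hyp{}child edges to solve it. With the factors fixed, $B$ is the $\gamma$\hyp{}incidence matrix of a well\hyp{}defined orientation $\gr{X}$ of $\gr{T}$; its diagonal products give $(B^\ast B)_{e,e}=2$, non\hyp{}adjacent columns give $0$, and the clique computation above gives the correct off\hyp{}diagonal entries, so $B^\ast B=\HG(\gr{Y})+2I$ and the reduction of the first paragraph concludes the proof.

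I expect the main obstacle to be precisely this gluing step, and the key point to isolate is that for a general root graph the analogous $\ZZ/3\ZZ$\hyp{}system acquires one compatibility equation per independent cycle (this is the phenomenon behind the multiplicity statements such as \Cref{thm:bip_three_roots}), whereas a tree has no independent cycles and the system is therefore unconditionally solvable. A secondary point that still requires care is the verification that the per\hyp{}clique potentials indeed take values in $\{1,\gamma,\gamma^2\}$ and that the propagation never forces a value outside this set, which follows from $\gamma^3=1$ together with the fact that all store values and all chosen unit factors are powers of $\gamma$.
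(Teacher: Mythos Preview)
Your argument is correct. Both your proof and the paper's proof construct the $\gamma$\hyp{}incidence matrix $B$ of a mixed orientation $\gr{X}$ of $\gr{T}$ by propagation along the tree and then verify $B^\ast B=\HG(\gr{Y})+2I$; the underlying mechanism is the same. The presentations differ, however, and the difference is worth recording.

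The paper works globally: since $\gr{Y}$ is a $\gamma$\hyp{}monograph it has a single orientation matrix $\ORd$ on $V(\gr{Y})=E(\gr{T})$; the paper then fixes a root vertex of $\gr{T}$, recursively defines a second diagonal matrix $\OR$ on $V(\gr{T})$ via the relation $\ORd_{e}=\OR_v\,\Bp_{v,e}$ along tree paths, and finally sets $\Bp=\OR^\ast B\,\ORd$ (with $B$ the ordinary incidence matrix of $\gr{T}$), verifying \eqref{eq:verifybpx} and \eqref{eq:verifybpy2} algebraically. Your approach works locally: you use only that each star\hyp{}clique $Q_y$ is a $\gamma$\hyp{}monograph to obtain a potential $\phi_y$ on it, and then reduce the compatibility across cliques to a $\ZZ/3\ZZ$\hyp{}system $c_xc_y=w(e)$ indexed by $E(\gr{T})$, which is solvable precisely because $\gr{T}$ has no cycles. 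If one takes your base potentials to be the restrictions of the paper's global $\ORd$, the resulting $c_y$ coincide with $\overline{\OR_y}$, so the two constructions are formally isomorphic. What your framing buys is a cleaner separation of the two ingredients (per\hyp{}clique weight\hyp{}$1$ condition versus acyclicity of the root), which makes transparent why \Cref{thm:weight1} is the right necessary condition and why the extension to non\hyp{}tree roots in \Cref{thm:monoroot2} acquires exactly one extra constraint per independent cycle of $\gr{G}$; the paper's explicit recursion, on the other hand, is closer to an algorithm and feeds directly into the corollary.
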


\begin{proof}
Our goal is to construct a mixed orientation $\gr{X}$ of $\gr{T}$ such that $\gr{Y}=\AL(\gr{X})$. By \Cref{thm:reorient1}, since $\gr{Y}$ is a $\gamma$\hyp{}monograph,
it has an orientation matrix $\ORd$, i.e.
\begin{align}\label{eq:Yorient}
  A(\Gamma(\gr{Y})) = \ORd \HG(\gr{Y}) \ORd^\ast.
\end{align}

Below we outline a recursive procedure that defines a diagonal matrix $\OR$ and the
$\gamma$\hyp{}incidence matrix $\Bp$ of a mixed orientation of $\gr{T}$ satisfying the claim of the \namecref{thm:monoroot}.
To this end, let $B$ be the incidence matrix of $\gr{T}$. Fix some vertex $u\in V(\gr{T})$ and 
a seed value $\OR_u\in\{1,\gamma,\gamma^2\}$.
Now, consider any path $P$ in $\gr{T}$ with vertices $u=v_1, v_2, \ldots, v_k=v$ from $u$ to $v\in V(\gr{T})$
and use the defining equation
\begin{align}\label{eq:recur}
  \ORd_{v_iv_{i+1}} = \OR_{v_i}\Bp_{v_i, v_iv_{i+1}}
\end{align}
in order to determine $\Bp_{v_1,v_1v_2}$:
\begin{align}\label{eq:recur_step1}
  \Bp_{v_1,v_1v_2} = \frac{\ORd_{v_1v_2}}{\OR_{v_1}}.
\end{align}
Requiring $\Bp$ to be a $\gamma$\hyp{}incidence matrix, we set $\Bp_{v_2,v_1v_2}=\overline{\Bp_{v_1,v_1v_2}}$.
Using equation \eqref{eq:recur} once again, we deduce
\begin{align}\label{eq:recur_step2}
  \ORd_{v_1v_2} = \ORd_{v_2v_1} = \OR_{v_2} \Bp_{v_2,v_1v_2}
\end{align}
and therefore we can compute $\OR_{v_2}$ from previously known values as
\begin{align}\label{eq:recur_step3}
\OR_{v_2} = \overline{\ORd_{v_1v_2}}\ \overline{\OR_{u_1}}.
\end{align}
Continuing along the vertices of $P$, we can repeatedly apply the same pattern of computations
as in equations \eqref{eq:recur_step1}, \eqref{eq:recur_step2}, \eqref{eq:recur_step3} to get a
recursive formula
\begin{align}\label{eq:prefinalrecur}
\OR_{v_i} = (\ORd_{v_iv_{i-1}})^2 \overline{\OR_{v_{i-1}}}
\end{align}
and finally
\begin{align}\label{eq:finalrecur}
\OR_{v_i} = (\ORd_{v_iv_{i-1}})^2 (\overline{\ORd_{v_{i-1}v_{i-2}}})^2 \cdot \ldots \cdot \begin{cases} \OR_{v_1} & \text{if $i$ is odd} \\ \overline{\OR_{v_{1}}} & \text{if $i$ is even} \end{cases}.
\end{align}

Since every vertex $v$ of $\gr{T}$ can be reached by a unique path $P$ from $u$ to $v$ in $\gr{T}$, the matrix $\OR$ is thus complete and well-defined.
Minding the seed value, it is clear from \eqref{eq:finalrecur} that $\OR_{v_i}\in\{1,\gamma,\gamma^2\}$.
Hence, by virtue of \Cref{thm:reorient2}, we have
\begin{align}\label{eq:rootxdef}
 \OR^\ast A(\gr{T}) \OR = \HG(\gr{X})
\end{align}
for some mixed orientation $\gr{X}$ of $\gr{T}$.

With respect to the partially defined matrix $\Bp$, note that for every edge $v_1v_2$ of $\gr{T}$ we have defined two entries
$\Bp_{v_1,v_1v_2} =\overline{\Bp_{v_2,v_1v_2}} \in\{1,\gamma,\gamma^2\}$ in the column indexed by $v_1v_2$. 
Augment $\Bp_{x,xy} =0$ for all entries of $\Bp$ not defined so far. Then, by construction, $\Bp=\OR^\ast B \ORd$
is the incidence matrix of some mixed orientation of $\gr{T}$. Keeping in mind equation \eqref{eq:rootxdef}, we compute
\begin{align}
\begin{aligned}\label{eq:verifybpx}
 \Bp \Bp^\ast & = \OR^\ast B \ORd \ORd^\ast B^\ast \OR = \OR^\ast (A(\gr{T}) + \diag((\deg(v))_{v\in V(\gr{T})}) \OR \\
              & = \HG(\gr{X}) +  \diag((\deg(v))_{v\in V(\gr{T})}),
\end{aligned}
\end{align}
thus $\Bp$ is actually the $\gamma$\hyp{}incidence matrix of $\gr{X}$, as per \Cref{thm:identincidence}.
Moreover, using \eqref{eq:Yorient}, we conclude from
\begin{align}
\begin{aligned}\label{eq:verifybpy2}
\Bp^\ast\Bp & = (\ORd^\ast B^\ast \OR)(\OR^\ast B \ORd) = \ORd^\ast (B^\ast B) \ORd \\
            & =  \ORd^\ast A(\Gamma(\gr{Y})) \ORd + 2I =  \ORd^\ast \ORd \HG(\gr{Y}) \ORd^\ast \ORd + 2I \\
						& = \HG(\gr{Y}) + 2I
\end{aligned} 
\end{align}
that indeed $\gr{Y}=\AL(\gr{X})$, as per \Cref{thm:identincidroot}.
\end{proof}

\begin{corollary}
Let $\gr{Y}$ be a mixed graph such that $\Gamma(\gr{Y})=L(\gr{T})$ for some tree $\gr{T}$.
Then $\gr{T}$ has exactly three different mixed orientations $\gr{X}$ such that 
$\gr{Y}=\AL(\gr{X})$.
\end{corollary}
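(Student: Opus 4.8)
The plan is to read this off as an immediate consequence of the two preceding results: \Cref{thm:monoroot} supplies the \emph{existence} of a root, while \Cref{thm:bip_three_roots} pins down the exact \emph{number}, the only bridge between them being the elementary observation that every tree is bipartite. So no new computation should be needed; the work is entirely in arranging the two theorems in the correct logical order.

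First, I would use that $\gr{Y}$ is a mixed $\gamma$\hyp{}monograph (the hypothesis inherited from the setting of \Cref{thm:monoroot}) together with $\Gamma(\gr{Y})=L(\gr{T})$ to invoke \Cref{thm:monoroot}, obtaining at least one mixed orientation $\gr{X}$ of $\gr{T}$ with $\AL(\gr{X})=\gr{Y}$. This certifies that $\gr{Y}$ is genuinely a $\gamma$\hyp{}line graph, which is precisely the standing assumption of \Cref{thm:bip_three_roots}. Next, I would take the root graph $\gr{G}$ appearing in \Cref{thm:bip_three_roots} to be the tree $\gr{T}$ itself, which is indeed a root of $\Gamma(\gr{Y})=L(\gr{T})$. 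Fixing $\gr{G}=\gr{T}$ in this way also sidesteps the Whitney non\hyp{}uniqueness issue (the exceptional identity $L(K_{1,3})=C_3=L(C_3)$ is harmless, since $C_3$ is not a tree and we are counting orientations of the prescribed tree $\gr{T}$ only). Since every tree is acyclic, it contains no odd cycle and is therefore bipartite, so the bipartite branch of \Cref{thm:bip_three_roots} delivers exactly three distinct mixed root orientations $\gr{X}$, corresponding to the three admissible seed values $1,\gamma,\gamma^2$ for the initial edge.

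The only genuinely delicate point is this logical ordering. The counting statement of \Cref{thm:bip_three_roots} presumes $\gr{Y}$ to be a $\gamma$\hyp{}line graph, i.e.\ it counts roots only once their existence is already known, so existence must be secured before the count is applied. Here the $\gamma$\hyp{}monograph property is what makes existence possible: by \Cref{thm:monoiff} every mixed orientation of the tree $\gr{T}$ is automatically a $\gamma$\hyp{}monograph, hence so is any $\gamma$\hyp{}line graph it produces, which means a root can exist only when $\gr{Y}$ is a $\gamma$\hyp{}monograph — and \Cref{thm:monoroot} shows this condition is also sufficient. Once existence is in hand, the bipartiteness of $\gr{T}$ is exactly what raises the count from one (the generic, non\hyp{}bipartite case) to precisely three, completing the argument with no further work.
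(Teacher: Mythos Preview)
Your argument is correct and is exactly the deduction the paper intends: the corollary is stated without proof immediately after \Cref{thm:monoroot}, and the implicit derivation is precisely to combine \Cref{thm:monoroot} (existence of a root under the $\gamma$\hyp{}monograph hypothesis) with \Cref{thm:bip_three_roots} (exactly three roots when the undirected root is bipartite), using that every tree is bipartite. You also rightly flag that the $\gamma$\hyp{}monograph hypothesis must be inherited from \Cref{thm:monoroot}, since without it the statement is false (any mixed orientation of a tree is trivially a $\gamma$\hyp{}monograph, so by \Cref{thm:monoiff} its $\gamma$\hyp{}line graph is too; hence a non\hyp{}monograph $\gr{Y}$ would have zero roots).
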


It is possible to generalize \Cref{thm:monoroot} as follows: 
\begin{corollary}\label{thm:monoroot2}
Given any $\gamma$\hyp{}monograph $\gr{Y}'$ such that $\Gamma(\gr{Y}')=L(\gr{G})$
for some undirected root graph $\gr{G}$, consider a spanning tree $\gr{T}$ of $\gr{G}$ and construct the matrices $\OR$, $\ORd$ and $\Bp$ as outlined
in the proof of \Cref{thm:monoroot}. However, instead of augmenting $\Bp$ with zeroes, redefine it as $\Bp:=\OR^\ast B' \ORd$, where
$B'$ is the incidence matrix of $\gr{G}$ (not $\gr{T}$).
If for every edge $xy\in E(\gr{G})\setminus E(\gr{T})$ we have $\OR_x\OR_y=(\ORd_{xy})^2$, then the mixed graph $\gr{Y}'$ with
$\HG(\gr{Y}')=\OR^\ast A(\gr{G}) \OR$ is a root of $\gr{Y}'$.
\end{corollary}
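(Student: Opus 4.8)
The plan is to mimic the proof of \Cref{thm:monoroot} almost line by line, using the full incidence matrix $B'$ of $\gr{G}$ in place of the tree incidence matrix and isolating the single point at which the hypothesis on non-tree edges becomes indispensable. First I would note that the recursive construction of $\OR$ in \Cref{thm:monoroot} refers only to the spanning tree $\gr{T}$: as every vertex of $\gr{G}$ is joined to the seed vertex $u$ by a unique path in $\gr{T}$, the diagonal matrix $\OR$ is well-defined with all entries in $\{1,\gamma,\gamma^2\}$. Since $\gamma^2=\overline\gamma=\gamma^{-1}$, any quotient of two such entries has the form $\gamma^k$ with $k\in\{-1,0,1\}$, so the hypothesis of \Cref{thm:reorient2} holds on \emph{every} edge of $\gr{G}$. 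Consequently $\HG(\gr{X}):=\OR^\ast A(\gr{G})\OR$ is the $\gamma$\hyp{}Hermitian adjacency matrix of a genuine $\gamma$\hyp{}monograph $\gr{X}$ with $\Gamma(\gr{X})=\gr{G}$ (this is the mixed graph named in the statement).

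The crux is to verify that $\Bp=\OR^\ast B'\ORd$ is an honest $\gamma$\hyp{}incidence matrix, i.e.\ that $\Bp_{x,xy}=\overline{\Bp_{y,xy}}\in\{1,\gamma,\gamma^2\}$ for every edge $xy$. A direct reading of the product gives $\Bp_{x,xy}=\overline{\OR_x}\,\ORd_{xy}$ and $\Bp_{y,xy}=\overline{\OR_y}\,\ORd_{xy}$, which automatically lie in $\{1,\gamma,\gamma^2\}$; the conjugacy relation $\Bp_{x,xy}=\overline{\Bp_{y,xy}}$ then rearranges to the single scalar condition $\OR_x\OR_y=(\ORd_{xy})^2$. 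For a tree edge this identity is exactly what the recursion \eqref{eq:recur}--\eqref{eq:finalrecur} enforces (from $\OR_{v_i}=(\ORd_{v_iv_{i-1}})^2\overline{\OR_{v_{i-1}}}$ one reads off $\OR_{v_{i-1}}\OR_{v_i}=(\ORd_{v_{i-1}v_i})^2$), so the tree part of $\Bp$ is consistent for free. For a non-tree edge $xy\in E(\gr{G})\setminus E(\gr{T})$ no such recursion is available, and the stated hypothesis $\OR_x\OR_y=(\ORd_{xy})^2$ is precisely the closing condition that turns the corresponding column of $\Bp$ into a valid $\gamma$\hyp{}incidence column. This is the one place where the extra assumption is used, and it is the step I expect to require the most care, since a conjugation slip here would interchange the roles of $\gamma$ and $\gamma^2$.

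With $\Bp$ established as a $\gamma$\hyp{}incidence matrix, the remaining two identities transcribe those of \Cref{thm:monoroot} verbatim. Using $\ORd\ORd^\ast=\OR\OR^\ast=I$ and the fact that $\OR$ commutes with the degree matrix $D$ (so $\OR^\ast D\OR=D$), one computes $\Bp\Bp^\ast=\OR^\ast(B'(B')^\ast)\OR=\OR^\ast(A(\gr{G})+D)\OR=\HG(\gr{X})+D$ by way of \eqref{eq:bbt}; then \Cref{thm:identincidence} certifies that $\Bp$ is indeed the $\gamma$\hyp{}incidence matrix of $\gr{X}$. Dually, $\Bp^\ast\Bp=\ORd^\ast((B')^\ast B')\ORd=\ORd^\ast(A(L(\gr{G}))+2I)\ORd$ by \eqref{eq:btb}; substituting the orientation identity $A(\Gamma(\gr{Y}'))=\ORd\HG(\gr{Y}')\ORd^\ast$ (cf.\ \eqref{eq:Yorient}) and cancelling the unitary factors yields $\Bp^\ast\Bp=\HG(\gr{Y}')+2I$. \Cref{thm:identincidroot} then delivers $\AL(\gr{X})=\gr{Y}'$, i.e.\ $\gr{X}$ is a root of $\gr{Y}'$, as claimed.
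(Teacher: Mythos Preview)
Your proposal is correct and follows essentially the same approach as the paper: verify that $\Bp=\OR^\ast B'\ORd$ is a genuine $\gamma$\hyp{}incidence matrix (the non-tree-edge hypothesis $\OR_x\OR_y=(\ORd_{xy})^2$ being exactly the conjugacy condition $\Bp_{x,xy}=\overline{\Bp_{y,xy}}$), then lift the computations \eqref{eq:verifybpx} and \eqref{eq:verifybpy2} from $\gr{T}$ to $\gr{G}$ to invoke \Cref{thm:identincidence} and \Cref{thm:identincidroot}. Your write-up is in fact more explicit than the paper's, which simply asserts that ``the remainder of the proof of \Cref{thm:monoroot} can now be lifted to the entire graph $\gr{G}$.''
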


\begin{proof}
First one needs to verify that $\Bp$ is a proper $\gamma$\hyp{}incidence matrix.
As $\Bp$ has the same zero-nonzero pattern as $B$, it suffices to check that $\Bp_{x,xy}=\overline{\Bp_{y,xy}}$ holds for each of the extra
edges $xy\in E(\gr{G})\setminus E(\gr{T})$.
Note that the redefinition of $\Bp$ merely augments any entries not yet specified.
It follows from $\OR_x\OR_y=(\ORd_{xy})^2$ that
$\Bp_{x,xy}= \OR_x B_{x,xy} \overline{\ORd_{xy}} = (\ORd^2_{xy}\overline{\OR_y})\overline{\ORd_{xy}} = \ORd_{xy}\overline{\OR_y}=\overline{\Bp_{y,xy}}$.
We see that equations \eqref{eq:prefinalrecur} and \eqref{eq:finalrecur} are now satisfied for arbitrary paths in $\gr{G}$. Consequently, the remainder of the
proof of \Cref{thm:monoroot} can now be lifted to the entire graph $\gr{G}$.
\end{proof}

Note that, whenever the conditions stated in \Cref{thm:monoroot2} are met, it permits the construction of three valid mixed roots -- hence \Cref{thm:bip_three_roots} applies.


\bibliography{hline}

\end{document}